\definecolor{marin}{rgb}   {0.,   0.3,   0.7} 
\definecolor{rouge}{rgb}   {0.8,   0.,   0.} 
\definecolor{sepia}{rgb}   {0.8,   0.5,   0.} 
\newcommand{\R}{{\mathbb R}}
\newcommand{\D}{{\partial}}
\newcommand{\ucar}{\text{\scriptsize $\sqcup$\hspace{-6pt}$\boldsymbol\sqcup$\hspace{-5pt}$\sqcup$}}
\newcommand{\Nabla}{{\nabla\hspace{-2pt}}}
\newtheorem{theorem}{Theorem}[section]
\newtheorem{lemma}[theorem]{Lemma}
\newtheorem{property}[theorem]{Property}
\newtheorem{remark}[theorem]{Remark}
\def\commutatif{\ar@{}[rd]|{\circlearrowleft}}
\numberwithin{equation}{section}
\title{Two-Scale Macro-Micro decomposition of the Vlasov equation with a strong magnetic field}
\author{Nicolas CROUSEILLES\thanks{Inria Rennes-Bretagne Atlantique, IPSO Project \& IRMAR (UMR CNRS 6625) Universit\'e de Rennes 1.}
\and
Emmanuel FR\'ENOD\thanks{Universit\'e Europ\'eenne de Bretagne, LMBA (UMR CNRS 6205), Universit\'e de Bretagne-Sud \& Inria Nancy-Grand Est, CALVI Project  \& IRMA (UMR CNRS 7501), Universit\'e de Strasbourg.}
\and
Sever A. HIRSTOAGA\thanks{Inria Nancy-Grand Est, CALVI Project \& IRMA  (UMR CNRS 7501), Universit\'e de Strasbourg.}
\and
Alexandre MOUTON\thanks{CNRS \& Laboratoire Painlev\'e (UMR CNRS 8524) Universit\'e de Lille 1.}
}
\date{}
\begin{document}

\maketitle

\begin{abstract}
In this paper, we build a Two-Scale Macro-Micro decomposition of the Vlasov equation with a strong magnetic field. This consists in writing the solution of this equation as a sum of two oscillating functions with circumscribed oscillations. The first of these functions has a shape which is close to the shape of the Two-Scale limit of the solution and the second one is a correction built to offset this imposed shape. The aim of such a decomposition is to be the starting point for the construction of Two-Scale Asymptotic-Preserving Schemes.
\end{abstract}

\section{Introduction}
\setcounter{equation}{0}

The goal of this paper is to make the first step towards the setting out of a new class of numerical methods: the \textit{Two-Scale Asymptotic-Preserving Schemes} or TSAPS. We intend to use these methods in order to treat problems involving the following two numerical difficulties: first, dealing efficiently on long time scales with solutions having high frequency oscillations (the \textit{Two-Scale} approach) and second, an accurate and stable managing of the transition between different regimes, using a unified model (the \textit{Asymptotic-Preserving} approach). These problems naturally occur in solving a Vlasov equation implying some small parameter, for the modelling of the dynamics of charged particles in the presence of a strong magnetic field. More precisely, we are interested in developing a model whose discretization will be able to simulate both, the regime when the parameter is not small (\textit{i.e.} the magnetic field is not large), and the limit regime obtained when the parameter is small. The discrete scheme will automatically shift from one regime to the other. In addition, in the limit of the small parameter, the particles mean behaviour will be efficiently described by the Two-Scale limit while the micro behaviour will be expressed by some correctors.\\~\\
\indent  In the framework of a large magnetic field modelization, we  refer to \cite{Bostan2007,Frenod-Mouton,Frenod-Sonnen_FLR,Golse-Saint-Raymond_1999, Golse-Saint-Raymond_2003,Han-Kwan} and the references therein as previous works about different regimes, obtained by taking limits of Vlasov or Vlasov-Poisson equations corresponding to small parameters. Then, we refer to \cite{Ailliot-Frenod-Monbet, Allaire, Frenod-Mouton-Sonnen, Frenod-Salvarani-Sonnen,Mouton, Mouton_PhD, Nguetseng} for a detailed description of Two-Scale numerical methods. As for the Asymptotic-Preserving schemes, although there is an abundant literature about, we first cite the classical paper \cite{Jin} and then \cite{Bennoune, Crouseilles, filbet-jin, klar, Lemou-Mieussens} which treat problems close to ours, without considering the two-scale aspect.\\
\indent Next we start to detail the method, by setting a generic and formal problem
\begin{equation} \label{generic_Eu_eps}
E^{\epsilon}\,u^{\epsilon} = 0 \, ,
\end{equation}
in which $E^{\epsilon}$ is a partial differential operator depending on a parameter $\epsilon$ which induces oscillations of typical size $\epsilon$ in the solution $u^{\epsilon} = u^{\epsilon}(z)$. In the case where $\epsilon$ is uniformly small over the domain on which (\ref{generic_Eu_eps}) is posed, a Two-Scale approach may give good results. To summarize, this approach consists in noticing that, in some sense,
\begin{equation} \label{generic_Eu_TS_rebuild}
u^{\epsilon}(z) \approx U(z,\frac{z}{\epsilon}) \, ,
\end{equation}
where $U = U(z,\zeta)$ is the Two-Scale limit of $u^{\epsilon}$ (Two-Scale convergence goes back to \cite{Nguetseng}), and that $U$ is the solution of a well-posed problem of the form
\begin{equation} \label{qeneric_Eu_TS}
\mathcal{E}\,U = 0 \, ,
\end{equation}
where $\mathcal{E}$ does not induce high frequency oscillations. Building a Two-Scale numerical method consists then in solving, via an usual numerical method, model (\ref{qeneric_Eu_TS}) and rebuilding $u^{\epsilon}$ using an algorithm based on (\ref{generic_Eu_TS_rebuild}). \\
\indent In the case where $\epsilon$ is not a uniform parameter over the domain, in the sense that it may be small in some sub-region and of order 1 in other ones, Asymptotic-Preserving schemes are intended to be used, since they allow to simulate both sub-regions with the same method. The methodology for these numerical schemes is based on the fact that $u^{\epsilon}$ may converge, in a strong or a weak topology, towards a limit $u$ which is solution of
\begin{equation} \label{generic_Eu_limit}
E\,u = 0 \, .
\end{equation}
Then, an Asymptotic-Preserving scheme consists in solving 
\begin{equation} \label{generic_Eu_eps_AP}
E_{\Delta z}^{\epsilon}\,u_{\Delta z}^{\epsilon} = 0 \, ,
\end{equation}
such that
\begin{equation} \label{generic_eps_consistency_AP}
u_{\Delta z}^{\epsilon} \longrightarrow u^{\epsilon} \, , \qquad \textnormal{as $\Delta z \to 0$,}
\end{equation}
and
\begin{equation} \label{generic_cv_eps_AP}
u_{\Delta z}^{\epsilon} \longrightarrow u_{\Delta z} \, , \qquad \textnormal{as $\epsilon \to 0$,}
\end{equation}
where $u_{\Delta z}$ is a numerical approximation of $u$, meaning that
\begin{equation} \label{generic_0_consistency_AP}
u_{\Delta z} \longrightarrow u \, , \qquad \textnormal{as $\Delta z \to 0$,}
\end{equation}
and is the solution of
\begin{equation} \label{generic_Eu_0_AP}
E_{\Delta z}\, u_{\Delta z} = 0 \, ,
\end{equation}
where $E_{\Delta z}$ is a numerical operator. Notice that 
(\ref{generic_eps_consistency_AP})-(\ref{generic_cv_eps_AP})-(\ref{generic_0_consistency_AP}) is called the \textit{Asymptotic-Preserving property}. These convergence results may be summarized in the following diagram: \\
\begin{equation} \label{AP_diagram}
\xymatrix{
\textnormal{
\begin{tabular}{c}
$u^{\epsilon}$ solution of \\
$E^{\epsilon}\,u^{\epsilon} = 0$
\end{tabular}
}
\ar[rr]^{\epsilon \, \to \, 0}
& & 
\textnormal{
\begin{tabular}{c}
$u$ solution of \\
$E\,u = 0$
\end{tabular}
}
\\ 
& & \\
\textnormal{
\begin{tabular}{c}
$u_{\Delta z}^{\epsilon}$ solution of \\
$E_{\Delta z}^{\epsilon}\,u_{\Delta z}^{\epsilon} = 0$
\end{tabular}
} 
\ar[rr]^{\epsilon \, \to \, 0}
\ar[uu]_{\Delta z \, \to \, 0}
& & 
\textnormal{
\begin{tabular}{c}
$u_{\Delta z}$ solution of \\
$E_{\Delta z}\,u_{\Delta z} = 0$
\end{tabular}
} 
\ar[uu]^{\Delta z \, \to \, 0}
}
\end{equation}

In the case when $u^{\epsilon}$ contains high frequency oscillations with high amplitude when $\epsilon$ is small, solving (\ref{generic_Eu_eps_AP}) can be time consuming to capture the oscillations. In this case, the convergence
\begin{equation} \label{generic_weak_cv}
u^{\epsilon} \to u \, ,
\end{equation}
occurs in a weak sense only. Besides this, the Two-Scale limit $U = U(z,\zeta)$ may describe very well the oscillations of $u^{\epsilon}$ in the sense that the convergence
\begin{equation}
U(z,\frac{z}{\epsilon})-u^{\epsilon}(z) \to 0 \, ,
\end{equation}
occurs in a stronger sense than (\ref{generic_weak_cv}). This is particularly the case when $E^{\epsilon}$ generates pseudo-periodic oscillations, essentially at a sole period, of size $\epsilon$ in the solution $u^{\epsilon}$ when $\epsilon$ becomes small. Moreover, $u$ and $U$ are linked by a relation of the kind
\begin{equation}
u(z) = \frac{1}{|\mathcal{Z}|}\int_{\mathcal{Z}} U(z,\zeta)\,d\zeta \, ,
\end{equation}
where $\mathcal{Z}$ is the cell such that $\zeta \mapsto U(z,\zeta)$ is $\mathcal{Z}$-periodic for any $z$, and $|\mathcal{Z}|$ is its measure. \\~\\
\indent In this case, we propose to add a layer in the diagram (\ref{AP_diagram}) which summarizes the ideas on which Two-Scale Asymptotic-Preserving Scheme are based:
\begin{equation} \label{diagram_TSAPS}
\xymatrix{
\textnormal{
\begin{tabular}{c}
$u^{\epsilon}$ solution of \\ $E^{\epsilon}\,u^{\epsilon} = 0$
\end{tabular}
}
\ar[rrr]^{\epsilon \, \to \, 0 \, , \textnormal{ weak-$*$}} \ar[drr]^{\epsilon \, \to\,0\,, \textnormal{ Two-Scale}} & & & \textnormal{
\begin{tabular}{c}
$u$ solution of \\ $E\,u=0$
\end{tabular}
} \\
& & \textnormal{
\begin{tabular}{c}
$U$ solution of \\ $\mathcal{E}\,U = 0$
\end{tabular}
} \ar[ru]_{\displaystyle \frac{1}{|\mathcal{Z}|}\int_{\mathcal{Z}}d\zeta} & \\
& \textnormal{
\begin{tabular}{c}
$U^{\epsilon}$ solution of \\ $\mathcal{E}^{\epsilon}\,U^{\epsilon} = 0$
\end{tabular}
} \ar[luu]^{\zeta\,=\,\cfrac{z}{\epsilon}} \ar[ru]_{\epsilon\,\to\,0} & & \\
\textnormal{
\begin{tabular}{c}
$u_{\Delta z}^{\epsilon}$ solution of \\ $E_{\Delta z}^{\epsilon} \,u_{\Delta z}^{\epsilon} = 0$
\end{tabular}
} \ar[uuu]^{\Delta z \, \to\, 0} \ar@{->}'[r]
'[rr]^{\epsilon\,\to\,0}
[rrr] & & & \textnormal{
\begin{tabular}{c}
$u_{\Delta z}$ solution of \\ $E_{\Delta z}\,u_{\Delta z} = 0$
\end{tabular}
} \ar[uuu]_{\Delta z \, \to\,0} \\
& & \textnormal{
\begin{tabular}{c}
$U_{\Delta z}$ solution of \\ $\mathcal{E}_{\Delta z}\,U_{\Delta z} = 0$
\end{tabular}
} \ar[uuu]_{\Delta z \, \to\, 0} \ar[ru]_{\displaystyle \frac{1}{|\mathcal{Z}|}\int_{\mathcal{Z}}^\text{\tiny \rm \fbox{NUM}}d\zeta} & \\
& \textnormal{
\begin{tabular}{c}
$U_{\Delta z}^{\epsilon}$ solution of \\ $\mathcal{E}_{\Delta z}^{\epsilon} \, U_{\Delta z}^{\epsilon} = 0$
\end{tabular}
} \ar[luu]^{\zeta\,=\,\cfrac{z}{\epsilon}} \ar[uuu]_{\Delta z\,\to\,0} \ar[ru]_{\epsilon\,\to\,0} & & 
}
\end{equation}
\indent Diagram (\ref{diagram_TSAPS}) may be looked at as a prism. At the bottom of the front edge stands the TSAPS and the back rectangle is made of the Asymptotic-Preserving scheme diagram of (\ref{AP_diagram}). The intermediate layer gathers the Two-Scale limit problem, at the top, and the Two-Scale numerical method, at the bottom.\\
\indent The solution $U_{\Delta z}^{\epsilon}$ computed with the TSAPS must be close to $u^{\epsilon}$ when it is computed in $\zeta = \frac{z}{\epsilon}$ and when $\Delta z$ converges to 0. This is the first part of the Two-Scale Asymptotic-Preserving property and it is symbolized by the arrow going from $U_{\Delta z}^{\epsilon}$ to $u_{\Delta z}^{\epsilon}$ and by the one linking $u_{\Delta z}^{\epsilon}$ and $u^{\epsilon}$. \\
The second part of the Two-Scale Asymptotic-Preserving property is symbolized by the right part of the diagram (\ref{diagram_TSAPS}) : $U_{\Delta z}^{\epsilon}$ must be close to the numerical approximation $U_{\Delta z}$ of the Two-Scale limit of $u_{\epsilon}$ when $\epsilon \to 0$ and a numerical integration (symbolized by $\displaystyle \frac{1}{|\mathcal{Z}|}\int_{\mathcal{Z}}^\text{\tiny \rm \fbox{NUM}} d\zeta$) of $U_{\Delta z}$ needs to recover an approximation $u_{\Delta z}$ of the weak-$*$ limit $u$ of $u^{\epsilon}$.\\
\indent We will deduce the Two-Scale Asymptotic-Preserving model on the base of a Two-Scale Macro-Micro decomposition that yields an equation which stands at the top of the front edge. Arrows linking $U^{\epsilon}$ and $u^{\epsilon}$, $U^{\epsilon}$ and $U$ and $u$ may be seen as the continuous Two-Scale Asymptotic-Preserving property. They are constraints for the Macro-Micro decomposition. We refer to \cite{Bennoune, Crouseilles, Jin-Shi,Lemou-Mieussens} for the use of Macro-Micro decomposition in order to design an Asymptotic-Preserving scheme. \\~\\

\indent The present paper implements the Two-Scale Macro-Micro decomposition in the case of the linear Vlasov equation with a strong magnetic field. This equation is the following:
\begin{equation} \label{Vlasov_rescaled_intro}
\left\{
\begin{array}{l}
\cfrac{\D f^{\epsilon}}{\D t} + \mathbf{v} \cdot \Nabla_{\mathbf{x}} f^{\epsilon} + \Big( \mathbf{E}^{\epsilon} + \mathbf{v} \times \big( \mathbf{B}^{\epsilon} + \cfrac{\mathcal{M}}{\epsilon} \big) \Big) \cdot \Nabla_{\mathbf{v}} f^{\epsilon} = 0 \, , \\ \\
f^{\epsilon}(t=0,\mathbf{x},\mathbf{v}) = f_{0}(\mathbf{x},\mathbf{v}) \, ,
\end{array}
\right.
\end{equation}
where $f^{\epsilon} = f^{\epsilon}(t,\mathbf{x},\mathbf{v})$ is the distribution function of an ion gaz submitted to external electric and magnetic field $\mathbf{E}^{\epsilon} = \mathbf{E}^{\epsilon}(t,\mathbf{x})$ and $\mathbf{B}^{\epsilon} = \mathbf{B}^{\epsilon}(t,\mathbf{x})$, where $t \in \R^{+}$, $\mathbf{x} \in \R^{3}$, $\mathbf{v} \in \R^{3}$, and where $\mathcal{M} = 2\pi\,\mathbf{e}_{1}$, \textit{i.e.} the first vector of the canonical base $(\mathbf{e}_{1},\mathbf{e}_{2},\mathbf{e}_{3})$ of $\R^{3}$. \\

\indent The paper is organized as follows: in Section \ref{SecTSC}, we recall the results of Fr\'enod \& Sonnendr\"ucker \cite{Frenod-Sonnen_Homogenization} giving the Two-Scale convergence of $f^{\epsilon}$. Roughly speaking they claim that, when $\epsilon$ is small,
\begin{eqnarray}
&& f^{\epsilon}(t,\mathbf{x},\mathbf{v}) \textrm{ is close to } G\big(t,\mathbf{x},\ucar(
\frac{t}{\epsilon},\mathbf{v})\big) \textrm{ in a strong sense, } \label{1nd-conv}\\
&& f^{\epsilon}(t,\mathbf{x},\mathbf{v}) \textrm{ is close to } f(t,\mathbf{x},\mathbf{v})
\textrm{ in a weak sense, }\label{2nd-conv}
\end{eqnarray}
and the equations for $G$ and $f$ are known ($\ucar(\tau,\mathbf{v})$ is defined by \eqref{DefUcar}). Then we show that the $\mathbf{v}$-dependence of $f$ is only through $v_{||}=\mathbf{v}\cdot\mathbf{e}_{1}$, $v_{\perp}=\sqrt{(\mathbf{v}\cdot\mathbf{e}_{2})^2+(\mathbf{v}\cdot\mathbf{e}_{3})^2}$ and not through the angular velocity component $\alpha$, which is such that $\mathbf{v}\cdot\mathbf{e}_{2}=v_{\perp}\cos\alpha$. Based on this fact, in Section \ref{SecUMMD}, we build a classical Macro-Micro decomposition of (\ref{Vlasov_rescaled_intro}) by following the same ideas as those of Lemou \& Mieussens \cite{Lemou-Mieussens}. It consists in decomposing $f^{\epsilon}$ in the following way 
\begin{equation}\label{eq:decolimf}
f^{\epsilon}\big(t,\mathbf{x},v_{\parallel}\;\mathbf{e_1}+v_{\perp}(\cos\alpha\;\mathbf{e_2} +
\sin\alpha\;\mathbf{e_3})\big) = \widetilde{m^{\epsilon}}(t,\mathbf{x},v_{||},v_{\perp}) + 
\widetilde{n^{\epsilon}}(t,\mathbf{x},v_{||},v_{\perp},\alpha)
\end{equation}
for some functions $\widetilde{m^{\epsilon}}$ and $\widetilde{n^{\epsilon}}$. Indeed, we will see in details that this decomposition is unique for every $\epsilon$,
small or not. Note that $\widetilde{m^{\epsilon}}$ does not depend on $\alpha$, as $f\big(t,\mathbf{x},v_{\parallel}\;\mathbf{e_1}+v_{\perp}(\cos\alpha\;\mathbf{e_2} + \sin\alpha\;\mathbf{e_3})\big)$, while $\widetilde{n^{\epsilon}}$ does. Equations for $\widetilde{m^{\epsilon}}$ and $\widetilde{n^{\epsilon}}$ are set out, making up the Macro-Micro model. The drawback of this model is that, because of \eqref{2nd-conv}, the exact equality \eqref{eq:decolimf} implies that function $\widetilde{n^{\epsilon}}$ is still highly oscillating, as $f^{\epsilon}$. To avoid this drawback, we develop the Two-Scale Macro-Micro framework. This is done in Sections \ref{SecTSMMDP}, \ref{SecTMaE}, and \ref{SecTMiE}, where we build the Two-Scale Macro-Micro approximation of $f^{\epsilon}$. First, based on results of Fr\'enod, Raviart \& Sonnendr\"ucker \cite{Raviart} we improve \eqref{1nd-conv} by using a first order approximation, claiming that when $\epsilon$ is small
\begin{equation}\label{3rd-conv}
f^{\epsilon}(t,\mathbf{x},\mathbf{v}) \,\textrm{ is close to }\, G\big(t,\mathbf{x},\ucar(
\frac{t}{\epsilon},\mathbf{v})\big) +\,\epsilon\,G_1\big(t,\mathbf{x},\ucar(
\frac{t}{\epsilon},\mathbf{v})\big) +\,\epsilon\,l\big(t,\frac{t}{\epsilon},\mathbf{x},
\mathbf{v})\big)\, \textrm{ in a strong sense, }
\end{equation}
where $G_1$ is the solution of a partial differential equation and $l$ is given by a formula (see \cite{Raviart}). Inspired by \eqref{3rd-conv}, we construct our Two-Scale Macro-Micro decomposition in the following way
\begin{equation}\label{TSMMdeco}
f^{\epsilon}(t,\mathbf{x},\mathbf{v})
= G\big(t,\mathbf{x},\ucar(\frac{t}{\epsilon},
\mathbf{v})\big)\,+\, \epsilon \,G_{1}^{\epsilon}\big(t,\mathbf{x},\ucar(\cfrac{t}{\epsilon},
\mathbf{v})\big)
+ \epsilon \, l(t,\cfrac{t}{\epsilon},\mathbf{x},\mathbf{v}) + \epsilon \, h^{\epsilon}
(t,\cfrac{t}{\epsilon},\mathbf{x},\mathbf{v}),
\end{equation}
where $G_{1}^{\epsilon}$ is intended to be close to $G_1$ when $\epsilon$ is small and $h^{\epsilon}$ is the corrector to be taken into account when the order of magnitude of $\epsilon$ is $1$. In other words, we use the Macro-Micro decomposition in order to make precise for every $\epsilon$ the approximation in \eqref{3rd-conv} which is valid only for small $\epsilon$. Then, we need to show that the decomposition in \eqref{TSMMdeco} is unique. This is done by paying attention to the properties of the operator 
\begin{gather}\label{the-oper}
 \cfrac{\D}{\D\tau} + (\mathbf{v} \times \mathcal{M}) \cdot \Nabla_{\mathbf{v}},
\end{gather}
since the expression \eqref{TSMMdeco} may also be set in the following form:
\begin{equation} \label{feps_decompo_intro}
f^{\epsilon}(t,\mathbf{x},\mathbf{v}) = \mathcal{A}^{\epsilon}(t,\frac{t}{\epsilon},
\mathbf{x},\mathbf{v})+\mathcal{B}^{\epsilon}(t,\cfrac{t}{\epsilon},\mathbf{x},
\mathbf{v}) \, ,
\end{equation}
with
\begin{equation} \label{decompo_intro_2}
\mathcal{A}^{\epsilon} = \mathcal{A}^{\epsilon}(t,\tau,\mathbf{x},\mathbf{v}) \in \textnormal{Ker}
\big( \cfrac{\D}{\D\tau} + (\mathbf{v} \times \mathcal{M}) \cdot \Nabla_{\mathbf{v}}\big)
\;\;\,\text{and}\;\;\,
\mathcal{B}^{\epsilon}= \mathcal{B}^{\epsilon}(t,\tau,\mathbf{x},\mathbf{v}) \in \Big(
\textnormal{Ker}\big( \cfrac{\D}{\D\tau} + (\mathbf{v} \times \mathcal{M}) \cdot
\Nabla_{\mathbf{v}}\big)\Big)^{\perp}\!\!\!.
\end{equation}
Then, we obtain the equations for the Macro part $G_{1}^{\epsilon}$ and the Micro one $h^{\epsilon}$. In Section \ref{SecRes}, we study the asymptotic behaviour of the Two-Scale Macro-Micro decomposed model as $\epsilon \to 0$. We close the paper with two appendices about some properties of the operator in \eqref{the-oper} leading mainly to the uniqueness of the decomposition in \eqref{feps_decompo_intro}.

\section{Two-Scale convergence of $f^{\epsilon}$} 
\label{SecTSC} % \ref{SecTSC} 
\setcounter{equation}{0}
In this section, we briefly recall the results of Fr\'enod \& Sonnendr\"ucker
\cite{Frenod-Sonnen_Homogenization} where the asymptotic behaviour of
\eqref{Vlasov_rescaled_intro} is studied. First, under the following hypotheses
\begin{equation} \label{hypotheses_f0}
f_{0} \geq 0 \, , \qquad f_{0} \in L^{2}(\R^{6}) \, ,
\end{equation}
and
\begin{equation} \label{hypotheses_EB}
\begin{split}
\mathbf{E}^{\epsilon} \to \mathbf{E} \quad \textnormal{strong in
$L^{\infty}\big(0,T;L_{\textrm{loc}}^{2}(\R^{3})\big)$,} \\
\mathbf{B}^{\epsilon} \to \mathbf{B} \quad \textnormal{strong in
$L^{\infty}\big(0,T;L_{\textrm{loc}}^{2}(\R^{3})\big)$,}
\end{split}
\end{equation}
we have that, for any $T > 0$, $(f^{\epsilon})_{\epsilon\,>\,0}$ is bounded in
$L^{\infty}\big(0,T;L^{2}(\R^{6})\big)$. Second, as $\epsilon \to 0$, up to a subsequence,
$(f^{\epsilon})_{\epsilon\,>\,0}$ Two-Scale converges towards (see \cite{Allaire,Nguetseng})
\begin{equation}
F = F(t,\tau,\mathbf{x},\mathbf{v}) \in L^{\infty}\big(0,T;L_{\#_{1}}^{\infty}(\R^{+};L^{2}(\R^{6}))\big),
\end{equation}
where the subscript $\#_{1}$ indicates that $\tau \mapsto F(t,\tau,\mathbf{x},\mathbf{v})$ is $1$-periodic for any $(t,\mathbf{x},\mathbf{v})$. Introduced in \cite{Nguetseng} and further analyzed and used in several homogenization problems in \cite{Allaire}, the Two-Scale convergence means that for any regular function $\psi = \psi(t,\tau,\mathbf{x},\mathbf{v})$ such that $(t,\mathbf{x},\mathbf{v})\mapsto \psi(t,\tau,\mathbf{x},\mathbf{v})$ is compactly supported in $\Omega = [0,T) \times \R^{6}$ for any $\tau$, and $\tau \mapsto \psi(t,\tau,\mathbf{x},\mathbf{v})$ is $1$-periodic for any $(t,\mathbf{x},\mathbf{v})$, we have
\begin{equation}
\lim_{\epsilon\,\to\,0} \int_{\Omega} f^{\epsilon}\,(\psi)^{\epsilon}\,dt\,d\mathbf{x}\,d\mathbf{v} = \int_{\Omega}\int_{0}^{1} F\,\psi\,d\tau \, dt\,d\mathbf{x}\,d\mathbf{v} \, ,
\end{equation}
where
\begin{equation} \label{def_psieps}
(\psi)^{\epsilon} = (\psi)^{\epsilon}(t,\mathbf{x},\mathbf{v}) = \psi(t,\frac{t}{\epsilon},\mathbf{x},\mathbf{v}) \, .
\end{equation}
It was shown in \cite{Frenod-Sonnen_Homogenization} that the Two-Scale limit $F$ has the following
property:
\begin{equation} \label{F_in_Ker}
F \in \textnormal{Ker}\big( \cfrac{\D}{\D\tau} + (\mathbf{v} \times \mathcal{M}) \cdot \Nabla_{\mathbf{v}}\big) \, ,
\end{equation}
where $\frac{\D}{\D\tau} + (\mathbf{v} \times \mathcal{M}) \cdot \Nabla_{\mathbf{v}}$ is seen as anti-symmetric, non-bounded, and with open range from $L^{\infty}\big(0,T;L_{\#_{1}}^{\infty}(\R^{+};$ $ L^{2}(\R^{6}))\big)$ on itself. The direct consequence of (\ref{F_in_Ker}) is that, introducing the rotation matrix $r$ as
\begin{equation} \label{mat_rotation}
r(\tau) = \left(
\begin{array}{ccc}
1 & 0 & 0 \\
0 & \cos\,(2\pi\tau) & -\sin\,(2\pi\tau) \\
0 & \sin\,(2\pi\tau) & \cos\,(2\pi\tau)
\end{array}
\right) \, ,
\end{equation}
the Two-Scale limit $F$ expresses itself in terms of a function $G = G(t,\mathbf{x},\mathbf{u}) \in L^{\infty}\big(0,T;L^{2}(\R^{3})\big)$ by
\begin{equation} \label{link_FG}
F(t,\tau,\mathbf{x},\mathbf{v}) = G\big(t,\mathbf{x},\ucar(\tau,\mathbf{v})\big) \, ,
\end{equation}
where
\begin{equation}
\label{DefUcar} % (\ref{DefUcar})
\ucar(\tau,\mathbf{v}) = r(\tau)\,\mathbf{v} \, .
\end{equation}
Using oscillating test functions defined by (\ref{def_psieps}) for regular functions $\psi$ which are moreover in $\textnormal{Ker}\big( \frac{\D}{\D\tau} + (\mathbf{v} \times \mathcal{M}) \cdot \Nabla_{\mathbf{v}}\big)$, it may be brought out that $G$ is the solution of
\begin{equation} \label{eq_G}
\left\{
\begin{array}{l}
\cfrac{\D G}{\D t} + \mathbf{u}_{||} \cdot \Nabla_{\mathbf{x}} G + (\mathbf{E}_{||} + \mathbf{u} \times \mathbf{B}_{||}) \cdot \Nabla_{\mathbf{u}} G = 0 \, , \\
G(t=0,\mathbf{x},\mathbf{u}) = f_{0}(\mathbf{x},\mathbf{u}) \, ,
\end{array}
\right.
\end{equation}
where we have used the notation $\mathbf{u}_{||} = (\mathbf{u}\cdot\mathbf{e}_{1})\,\mathbf{e}_{1}$. \\~\\
\indent Having in mind diagram (\ref{diagram_TSAPS}), we can say at this point that equation (\ref{Vlasov_rescaled_intro}) plays the role of $E^{\epsilon}\,u^{\epsilon} = 0$, and (\ref{link_FG})-(\ref{eq_G}) plays the role of $\mathcal{E}\,U = 0$. So, if we build a numerical method to approximate (\ref{eq_G}), this numerical method, coupled with (\ref{link_FG}) would play the role of $\mathcal{E}_{\Delta z}\,U_{\Delta z} = 0$. \\

\indent In \cite{Frenod-Sonnen_Homogenization}, it was also proved that $f$, the weak-$*$
limit of $(f^{\epsilon})_{\epsilon>0}$, is the solution to
\begin{equation} \label{eq_f_weak}
\left\{
\begin{array}{l}
\cfrac{\D f}{\D t} + \mathbf{v}_{||} \cdot \Nabla_{\mathbf{x}} f+ (\mathbf{E}_{||} + \mathbf{v} \times \mathbf{B}_{||}) \cdot \Nabla_{\mathbf{v}}f = 0 \, , \\
\displaystyle f(t=0,\mathbf{x},\mathbf{v}) = \int_{0}^{1} f_{0}\big(\mathbf{x},\ucar(\tau,\mathbf{v})\big) \, d\tau \, .
\end{array}
\right.
\end{equation}
This equation is obtained from (\ref{link_FG})-(\ref{eq_G}) by performing an integration in $\tau$ and by using the link between the weak-$*$ limit $f$ and the Two-Scale limit $F$ given by
\begin{equation} \label{link_fF}
f(t,\mathbf{x},\mathbf{v}) = \int_{0}^{1} F(t,\tau,\mathbf{x},\mathbf{v})\,d\tau \, .
\end{equation}
Then, equation (\ref{eq_f_weak}) plays the role of $E\,u=0$ in diagram (\ref{diagram_TSAPS}) and any of its numerical
approximations would play the role of $E_{\Delta z}\,u_{\Delta z} = 0$.

\section{Classical Macro-Micro decomposition}
\label{SecUMMD}
\setcounter{equation}{0}

\subsection{Rewriting the weak-$*$ limit model}
%[\red je ne comprends pas bien cette section : on refait la limite weak-$\star$ quand $\epsilon\rightarrow 0$ ? Pour expliquer que $f$ dans (\ref{eq_f_weak}) ne depend pas de $\alpha$ ?  \black ]
Before going further, we will rewrite model \eqref{eq_f_weak} in a form involving cylindrical
variables in velocity. In this coordinate system, the weak-$*$ limit is independent of the
angle variable, and consequently it is easily gotten that the $\mathbf{v}\times\mathbf{B}_{||}$
piece in the force term in \eqref{eq_f_weak} is removed.\\

Because of the average over $\tau$ and of the definition of $\ucar(\tau,\mathbf{v})$, it is obvious to see that the initial data in (\ref{eq_f_weak}) only depends on $\mathbf{x}$, $\mathbf{v}_{||}$ and $v_{\perp} = \sqrt{v_{2}^{2}+v_{3}^{2}}$. In other words, writing the velocity in the following cylindrical variables
\begin{equation}\label{cyl:coord}
\left\{
\begin{array}{l}
\textnormal{$v_{||}\in\R$ such that $v_{||} = \mathbf{v} \cdot \mathbf{e}_{1} \,$,} \\
\textnormal{$v_{\perp}\in\R^+$ such that $v_{\perp} = \sqrt{v_{2}^{2}+v_{3}^{2}} \, $,} \\
\textnormal{$\alpha\in [0,2\pi]$ such that $v_{2} = v_{\perp}\,\cos\alpha$ and $v_{3} = v_{\perp}\,\sin\alpha$,}
\end{array}
\right.
\end{equation}
there exists a function $m_{0} = m_{0}(\mathbf{x},v_{||},v_{\perp})$, independent of $\alpha$, such that
\begin{equation}\label{eq:m_0}
m_{0}(\mathbf{x},v_{||},v_{\perp}) = \cfrac{1}{2\pi} \, \int_{0}^{2\pi} f_{0}\big(\mathbf{x},\ucar(\tau,v_{||}\,\mathbf{e}_{1} + v_{\perp}\,\cos\alpha\,\mathbf{e}_{2} + v_{\perp}\,\sin\alpha\,\mathbf{e}_{3})\big)\,d\alpha \,.
\end{equation}
The quickest way to get this is to notice that
\begin{equation}
\ucar(\tau,v_{||}\,\mathbf{e}_{1} + v_{\perp}\,\cos\alpha\,\mathbf{e}_{2} + v_{\perp}\,\sin\alpha\,\mathbf{e}_{3}) = \left(
\begin{array}{c}
v_{||} \\ v_{\perp}\,\cos(\alpha+2\pi\tau) \\ v_{\perp}\,\sin(\alpha+2\pi\tau)
\end{array}
\right) \, ,
\end{equation}
and that, consequently, integrating in $\tau$ suppresses the $\alpha$-dependency. Hence,
\eqref{eq:m_0} writes also 
\begin{equation}
m_{0}(\mathbf{x},v_{||},v_{\perp}) = \cfrac{1}{2\pi} \, \int_{0}^{2\pi}f_{0}(\mathbf{x},v_{||}\,\mathbf{e}_{1} + v_{\perp}\,\cos\alpha\,\mathbf{e}_{2} + v_{\perp}\,\sin\alpha\,\mathbf{e}_{3})\,d\alpha \,.
\end{equation}
\indent Concerning the function $f$ itself, we introduce the function $m = m(t,\mathbf{x},v_{||},v_{\perp},\alpha)$ linked with $f$ by
\begin{equation}\label{link:mf}
m(t,\mathbf{x},v_{||},v_{\perp},\alpha) = f(t,\mathbf{x},v_{||}\,\mathbf{e}_{1}+ v_{\perp}\,\cos\alpha\,\mathbf{e}_{2} + v_{\perp}\,\sin\alpha\,\mathbf{e}_{3}) \, .
\end{equation}
Because of (\ref{link_fF}) and (\ref{link_FG}), and using the same argument as just above, it is easily seen that $m$ does not depend on $\alpha$ and that
\begin{equation}
\begin{split}
m(t,\mathbf{x},v_{||},v_{\perp}) &= \int_{0}^{1}G(t,\mathbf{x},v_{||}\,\mathbf{e}_{1}+ v_{\perp}\,\cos(\alpha+2\pi\tau)\,\mathbf{e}_{2} + v_{\perp}\,\sin(\alpha+2\pi\tau)\,\mathbf{e}_{3}) \, d\tau \\
&= \frac{1}{2\pi}\int_{0}^{2\pi}G(t,\mathbf{x},v_{||}\,\mathbf{e}_{1}+ v_{\perp}\,\cos\alpha\,\mathbf{e}_{2} + v_{\perp}\,\sin\alpha\,\mathbf{e}_{3}) \, d\alpha \, .
\end{split}
\end{equation}
Noticing that
\begin{equation}
\begin{split}
\cfrac{\D m}{\D v_{\perp}}(t,\mathbf{x},v_{||},v_{\perp}) &= \cos\alpha\,\cfrac{\D f}{\D v_{2}} (t,\mathbf{x},v_{||}\,\mathbf{e}_{1}+ v_{\perp}\,\cos\alpha\,\mathbf{e}_{2} + v_{\perp}\,\sin\alpha\,\mathbf{e}_{3}) \\
&\qquad + \sin\alpha\,\cfrac{\D f}{\D v_{3}} (t,\mathbf{x},v_{||}\,\mathbf{e}_{1}+ v_{\perp}\,\cos\alpha\,\mathbf{e}_{2} + v_{\perp}\,\sin\alpha\,\mathbf{e}_{3}) \, , \\
\cfrac{\D m}{\D\alpha}(t,\mathbf{x},v_{||},v_{\perp}) &= -v_{\perp}\,\sin\alpha\,\cfrac{\D f}{\D v_{2}} (t,\mathbf{x},v_{||}\,\mathbf{e}_{1}+ v_{\perp}\,\cos\alpha\,\mathbf{e}_{2} + v_{\perp}\,\sin\alpha\,\mathbf{e}_{3}) \\
&\qquad + v_{\perp}\,\cos\alpha\,\cfrac{\D f}{\D v_{3}} (t,\mathbf{x},v_{||}\,\mathbf{e}_{1}+ v_{\perp}\,\cos\alpha\,\mathbf{e}_{2} + v_{\perp}\,\sin\alpha\,\mathbf{e}_{3}) \, ,
\end{split}
\end{equation}
the equation for $m$ can be deduced from (\ref{eq_f_weak}). For this purpose, we develop 
$\mathbf{v} \times \mathbf{B}_{||}$ in the new variables:
\begin{equation}
v_{\perp}(\cos\alpha\,\mathbf{e}_{2} + \sin\alpha\,\mathbf{e}_{3}) \times \mathbf{B}_{||} = |B_{||}| \, v_{\perp}\, (\sin\alpha\,\mathbf{e}_{2} - \cos\alpha\, \mathbf{e}_{3}) \, .
\end{equation}
As a consequence, we have
\begin{equation}
\begin{split}
\big(v_{\perp}(\cos\alpha\,\mathbf{e}_{2} + \sin\alpha\,\mathbf{e}_{3}) \times \mathbf{B}_{||} \big) \cdot \Nabla_{\mathbf{v}} f(t,\mathbf{x},v_{||}\,\mathbf{e}_{1}+ v_{\perp}\,\cos\alpha\,\mathbf{e}_{2} &+ v_{\perp}\,\sin\alpha\,\mathbf{e}_{3}) \\
&= -|B_{||}|\,\cfrac{\D m}{\D \alpha}(t,\mathbf{x},v_{||},v_{\perp}) = 0 \, ,
\end{split}
\end{equation}
meaning that the force term in (\ref{eq_f_weak}) may be reduced to $\mathbf{E}_{||}$. Then, $m$ is the solution of
\begin{equation} \label{eq_m}
\left\{
\begin{array}{l}
\cfrac{\D m}{\D t} + v_{||} \, \cfrac{\D m}{\D x_{||}} + E_{||}\,\cfrac{\D m}{\D v_{||}} = 0 \, , \\
m(t=0,\mathbf{x},v_{||},v_{\perp}) = m_{0}(\mathbf{x},v_{||},v_{\perp}) \, .
\end{array}
\right.
\end{equation}

\subsection{Macro-Micro decomposition}
\label{Sec:Mmdeco}
Inspired by \eqref{eq_f_weak} or more precisely by \eqref{eq_m}, we now write a classical
Macro-Micro decomposition of the solution $f^{\epsilon}$ of \eqref{Vlasov_rescaled_intro}.
The aim of this decomposition is (see \cite{Bennoune, Lemou-Mieussens}) to build a new
model which is an equivalent reformulation of \eqref{Vlasov_rescaled_intro} for all
$\epsilon > 0$ and which straightforwardly gives the weak-$*$ limit model \eqref{eq_m}
when $\epsilon\to0$. Thus, this new formulation of problem \eqref{Vlasov_rescaled_intro}
by using the Macro-Micro decomposition is expected to be well adapted to the building
of an Asymptotic-Preserving scheme.

The main idea of the Macro-Micro decomposition is to write $f^{\epsilon}$ as a sum of a function
being in $\textnormal{Ker}\big((\mathbf{v} \times \mathcal{M})\cdot \Nabla_{\mathbf{v}}\big)$ and
a function being in $\big(\textnormal{Ker}\big((\mathbf{v} \times \mathcal{M})\cdot
\Nabla_{\mathbf{v}}\big)\big)^{\perp}$. Adapting this idea to our context and having in mind
\eqref{eq_m}, we seek $m_{1}^{\epsilon} = m_{1}^{\epsilon}(t,\mathbf{x},v_{||},v_{\perp}) \in
L^{\infty}\big(0,T;L^{2}(\R^{3} \times \R \times \R^{+}; v_{\perp}\,d\mathbf{x}\,dv_{||}\,dv_{\perp})
\big)$ and $p^{\epsilon} = p^{\epsilon}(t,\mathbf{x},v_{||},v_{\perp},\alpha) \in L^{\infty}
\big(0,T;L_{\#_{2\pi}}^{2}(\R;L^{2}(\R^{3} \times \R \times \R^{+}; v_{\perp}\,d\mathbf{x}\,dv_{||}\,
dv_{\perp}))\big)$ such that
\begin{equation} \label{def_neps}
n^{\epsilon}(t,\mathbf{x},v_{||},v_{\perp},\alpha) = \cfrac{\D p^{\epsilon}}{\D\alpha}(t,\mathbf{x},v_{||},v_{\perp},\alpha) \, ,
\end{equation}
and
\begin{equation} \label{decompo_feps_weak}
f^{\epsilon}(t,\mathbf{x},v_{||}\,\mathbf{e}_{1}+v_{\perp}\,\cos\alpha\,\mathbf{e}_{2}+v_{\perp}\,\sin\alpha\,\mathbf{e}_{3}) = m(t,\mathbf{x},v_{||},v_{\perp}) + m_{1}^{\epsilon}(t,\mathbf{x},v_{||},v_{\perp}) + n^{\epsilon}(t,\mathbf{x},v_{||},v_{\perp},\alpha).
\end{equation}
The subscript $\#_{2\pi}$ indicates that $\alpha\mapsto p^{\epsilon}(t,\mathbf{x},v_{||},
v_{\perp},\alpha)$ is $2\pi$-periodic for any $(t,\mathbf{x},v_{||},v_{\perp})$.

\indent We have proved in the previous section that $m \in \textnormal{Ker}\big(
\cfrac{\D}{\D\alpha}\big)$. In addition, considering the anti-symmetric, non-bounded
operator with closed range\footnote{See Appendix A.}
\begin{equation}
\begin{split}
\cfrac{\D}{\D\alpha} \, : \, L^{\infty}\big(0,T;L_{\#_{2\pi}}^{2}(\R;L^2(\R^{3} &\times \R \times \R^{+}; v_{\perp}\,d\mathbf{x}\,dv_{||}\,dv_{\perp}))\big) \\
&\longrightarrow \, L^{\infty}\big(0,T;L_{\#_{2\pi}}^{2}(\R;L^2(\R^{3} \times \R \times \R^{+}; v_{\perp}\,d\mathbf{x}\,dv_{||}\,dv_{\perp}))\big) \, ,
\end{split}
\end{equation}
we clearly have that
\begin{equation}
m_{1}^{\epsilon} \in \textnormal{Ker}\big(\cfrac{\D}{\D\alpha}\big) \qquad
\textnormal{and} \qquad n^{\epsilon} \in \Big( \textnormal{Ker}\big(\cfrac{\D}{\D\alpha}
\big)\Big)^{\perp} = \textnormal{Im}\big(\cfrac{\D}{\D\alpha}\big)
\end{equation}
exist and are unique.
%\begin{equation}
%\begin{array}{rcl}
%m &\in& \textnormal{Ker}\big(\cfrac{\D}{\D\alpha}\big) \, , \\
%
%m_{1}^{\epsilon} &\in& \textnormal{Ker}\big(\cfrac{\D}{\D\alpha}\big) \, , \\
%n^{\epsilon} &\in& \Big( \textnormal{Ker}\big(\cfrac{\D}{\D\alpha}\big) \Big)^{\perp} = \textnormal{Im}\big(\cfrac{\D}{\D\alpha}\big) \, .
%\end{array}
%\end{equation}
Then, if we inject the decomposition (\ref{decompo_feps_weak}) in Vlasov equation (\ref{Vlasov_rescaled_intro}), we obtain
\begin{equation} \label{Vlasov_with_mm1n}
\begin{split}
\cfrac{\D m}{\D t} &+ \cfrac{\D m_{1}^{\epsilon}}{\D t} + \cfrac{\D n^{\epsilon}}{\D t} + \left(
\begin{array}{c}
v_{||} \\ v_{\perp}\,\cos\alpha \\ v_{\perp}\,\sin\alpha
\end{array}
\right) \cdot \Nabla_{\mathbf{x}} (m+m_{1}^{\epsilon}+n^{\epsilon}) \\
&+ \left(
\begin{array}{c}
E_{||}^{\epsilon} + v_{\perp}\,(\cos\alpha \, B_{3}^{\epsilon}-\sin\alpha\,B_{2}^{\epsilon}) \\
\cos\alpha \, E_{2}^{\epsilon} + \sin\alpha \,E_{3}^{\epsilon} - v_{||}\,(\cos\alpha \, B_{3}^{\epsilon}-\sin\alpha\,B_{2}^{\epsilon}) \\
\cfrac{1}{v_{\perp}}\, \big(-\sin\alpha \, E_{2}^{\epsilon} + \cos\alpha \,E_{3}^{\epsilon} + v_{||}\,(\sin\alpha \, B_{3}^{\epsilon}+\cos\alpha\,B_{2}^{\epsilon})\big)-B_{||}^{\epsilon}-\cfrac{1}{\epsilon}
\end{array}
\right) \\
&\qquad \qquad \qquad \qquad \qquad \qquad \qquad \qquad \qquad \qquad \cdot \left(
\begin{array}{c}
\cfrac{\D m}{\D v_{||}} + \cfrac{\D m_{1}^{\epsilon}}{\D v_{||}} + \cfrac{\D n^{\epsilon}}{\D v_{||}} \\
\cfrac{\D m}{\D v_{\perp}} + \cfrac{\D m_{1}^{\epsilon}}{\D v_{\perp}} + \cfrac{\D n^{\epsilon}}{\D v_{\perp}} \\
\cfrac{\D n^{\epsilon}}{\D \alpha}
\end{array}
\right) = 0 \, .
\end{split}
\end{equation}
Combining this equation with (\ref{eq_m}), we finally obtain
\begin{equation}
\begin{split}
&\cfrac{\D m_{1}^{\epsilon}}{\D t} + \cfrac{\D n^{\epsilon}}{\D t} + \left(
\begin{array}{c}
0 \\ v_{\perp}\,\cos\alpha \\ v_{\perp}\,\sin\alpha
\end{array}
\right) \cdot \Nabla_{\mathbf{x}} m + \left(
\begin{array}{c}
v_{||} \\ v_{\perp}\,\cos\alpha \\ v_{\perp}\,\sin\alpha
\end{array}
\right) \cdot \Nabla_{\mathbf{x}} (m_{1}^{\epsilon}+n^{\epsilon}) \\
&+ \left(
\begin{array}{c}
E_{||}^{\epsilon}-E_{||} + v_{\perp}\,(\cos\alpha \, B_{3}^{\epsilon}-\sin\alpha\,B_{2}^{\epsilon}) \\
\cos\alpha \, E_{2}^{\epsilon} + \sin\alpha \,E_{3}^{\epsilon} - v_{||}\,(\cos\alpha \, B_{3}^{\epsilon}-\sin\alpha\,B_{2}^{\epsilon}) \\
\cfrac{1}{v_{\perp}}\, \big(-\sin\alpha \, E_{2}^{\epsilon} + \cos\alpha \,E_{3}^{\epsilon} + v_{||}\,(\sin\alpha \, B_{3}^{\epsilon}+\cos\alpha\,B_{2}^{\epsilon})\big)-B_{||}^{\epsilon}-\cfrac{1}{\epsilon}
\end{array}
\right) \cdot \left(
\begin{array}{c}
\cfrac{\D m}{\D v_{||}} \\
\cfrac{\D m}{\D v_{\perp}} \\
0
\end{array}
\right) \\
&+ \left(
\begin{array}{c}
E_{||}^{\epsilon} + v_{\perp}\,(\cos\alpha \, B_{3}^{\epsilon}-\sin\alpha\,B_{2}^{\epsilon}) \\
\cos\alpha \, E_{2}^{\epsilon} + \sin\alpha \,E_{3}^{\epsilon} - v_{||}\,(\cos\alpha \, B_{3}^{\epsilon}-\sin\alpha\,B_{2}^{\epsilon}) \\
\cfrac{1}{v_{\perp}}\, \big(-\sin\alpha \, E_{2}^{\epsilon} + \cos\alpha \,E_{3}^{\epsilon} + v_{||}\,(\sin\alpha \, B_{3}^{\epsilon}+\cos\alpha\,B_{2}^{\epsilon})\big)-B_{||}^{\epsilon}-\cfrac{1}{\epsilon}
\end{array}
\right) \cdot \left(
\begin{array}{c}
\cfrac{\D m_{1}^{\epsilon}}{\D v_{||}} + \cfrac{\D n^{\epsilon}}{\D v_{||}} \\
\cfrac{\D m_{1}^{\epsilon}}{\D v_{\perp}} + \cfrac{\D n^{\epsilon}}{\D v_{\perp}} \\
\cfrac{\D n^{\epsilon}}{\D \alpha}
\end{array}
\right) = 0 \, .
\end{split}
\end{equation}

\indent By projecting (\ref{Vlasov_with_mm1n}) onto $\textnormal{Ker}(\frac{\D}{\D\alpha})$ 
and $\big(\textnormal{Ker}(\frac{\D}{\D\alpha})\big)^{\perp} = \textnormal{Im}(\frac{\D}{\D\alpha})$, 
we will obtain the Macro-Micro decomposition of $f^{\epsilon}$. In order to do this, we use the fact
that (see Appendix A and \cite{Bostan_2010}) projecting a function onto 
$\textnormal{Ker}(\frac{\D}{\D\alpha})$ consists in computing its average in $\alpha$ 
and projecting it onto $\textnormal{Im}(\frac{\D}{\D\alpha})$ consists in substracting 
from it its average value. This is what we do in the next lines. \\
\indent Since $m$, $m_{1}^{\epsilon}$, $\mathbf{E}$, $\mathbf{E}^{\epsilon}$, $\mathbf{B}$ 
and $\mathbf{B}^{\epsilon}$ do not depend on $\alpha$, and recalling the definition of 
$n^{\epsilon}$ given in (\ref{def_neps}), the projection of (\ref{Vlasov_with_mm1n}) 
onto $\textnormal{Ker}(\frac{\D}{\D\alpha})$ gives
\begin{equation} \label{proj_Ker_weak}
\begin{split}
\cfrac{\D m_{1}^{\epsilon}}{\D t} &+ v_{||}\,\cfrac{\D m_{1}^{\epsilon}}{\D x_{||}} + E_{||}^{\epsilon}\, \cfrac{\D m_{1}^{\epsilon}}{\D v_{||}} \\
&= -(E_{||}^{\epsilon}-E_{||})\,\cfrac{\D m}{\D v_{||}} - \cfrac{v_{\perp}}{2\pi} \, \left(
\begin{array}{c}
0 \\ 1 \\ 1
\end{array}
\right) \cdot \left(
\begin{array}{c}
0 \\
\displaystyle \int_{0}^{2\pi} \sin\alpha\,\cfrac{\D p^{\epsilon}}{\D x_{2}} \, d\alpha \\
-\displaystyle \int_{0}^{2\pi} \cos\alpha\,\cfrac{\D p^{\epsilon}}{\D x_{3}} \, d\alpha
\end{array}
\right) \\
&\qquad - \cfrac{1}{2\pi}\, \mathbf{E}^{\epsilon} \cdot \left(
\begin{array}{c}
0 \\
\displaystyle \int_{0}^{2\pi} \sin\alpha\,\big(\cfrac{\D p^{\epsilon}}{\D v_{\perp}}+\cfrac{p^{\epsilon}}{v_{\perp}}\big) \, d\alpha \\
-\displaystyle \int_{0}^{2\pi} \cos\alpha\,\big(\cfrac{\D p^{\epsilon}}{\D v_{\perp}}+\cfrac{p^{\epsilon}}{v_{\perp}}\big) \, d\alpha
\end{array}
\right) \\
&\qquad + \cfrac{1}{2\pi}\,\left(
\begin{array}{c}
-v_{\perp} \\ v_{\parallel}\,B_{3}^{\epsilon} \\ v_{\parallel}\,B_{2}^{\epsilon}
\end{array}
\right) \cdot \left(
\begin{array}{c}
\displaystyle \int_{0}^{2\pi} (\cos\alpha\,B_{2}^{\epsilon}+\sin\alpha\,B_{3}^{\epsilon})\,\cfrac{\D p^{\epsilon}}{\D v_{||}} \, d\alpha \\
\displaystyle \int_{0}^{2\pi} \sin\alpha\,\big(\cfrac{\D p^{\epsilon}}{\D v_{\perp}}+\cfrac{p^{\epsilon}}{v_{\perp}}\big) \, d\alpha \\
\displaystyle \int_{0}^{2\pi} \cos\alpha\,\big(\cfrac{\D p^{\epsilon}}{\D v_{\perp}}+\cfrac{p^{\epsilon}}{v_{\perp}}\big) \, d\alpha
\end{array}
\right) \, .
\end{split}
\end{equation}
In the second, third and last terms of the right hand side of \eqref{proj_Ker_weak}, we use integrations by parts to tackle
$\alpha$-integrations.
Finally, we perform the projection onto $\textnormal{Im}(\frac{\D}{\D\alpha})$ by substracting (\ref{proj_Ker_weak}) from (\ref{Vlasov_with_mm1n}), we obtain a second order equation for $p^{\epsilon}$ in which $m$ and $m_{1}^{\epsilon}$ appear as right hand side members:
\begin{equation} \label{proj_Im_weak}
\begin{split}
&\cfrac{\D^{2}p^{\epsilon}}{\D t \, \D\alpha} + \left(
\begin{array}{c}
v_{||} \\ v_{\perp} \\ v_{\perp}
\end{array}
\right) \cdot \left(
\begin{array}{c}
\cfrac{\D^{2}p^{\epsilon}}{\D x_{||}\,\D\alpha} \\
\displaystyle \cos\alpha\,\cfrac{\D^{2}p^{\epsilon}}{\D x_{2}\,\D\alpha} - \cfrac{1}{2\pi}\,\int_{0}^{2\pi} \cos\alpha\,\cfrac{\D^{2}p^{\epsilon}}{\D x_{2}\,\D\alpha} \, d\alpha \\
\displaystyle \sin\alpha\,\cfrac{\D^{2}p^{\epsilon}}{\D x_{3}\,\D\alpha} - \cfrac{1}{2\pi}\,\int_{0}^{2\pi} \sin\alpha \,\cfrac{\D^{2}p^{\epsilon}}{\D x_{3}\,\D\alpha}\, d\alpha
\end{array}
\right) \\
& + \mathbf{E}^{\epsilon} \cdot \left(
\begin{array}{c}
\cfrac{\D^{2}p^{\epsilon}}{\D v_{||}\,\D\alpha} \\
\displaystyle \cos\alpha\,\cfrac{\D^{2}p^{\epsilon}}{\D v_{\perp}\,\D\alpha} - \cfrac{1}{2\pi}\,\int_{0}^{2\pi} \cos\alpha\,\cfrac{\D^{2}p^{\epsilon}}{\D v_{\perp}\,\D\alpha} \, d\alpha - \cfrac{1}{v_{\perp}}\,\sin\alpha\,\cfrac{\D^{2}p^{\epsilon}}{\D\alpha^{2}} + \cfrac{1}{2\pi\,v_{\perp}}\,\int_{0}^{2\pi} \sin\alpha \,\cfrac{\D^{2}p^{\epsilon}}{\D\alpha^{2}}\, d\alpha \\
\displaystyle \sin\alpha\,\cfrac{\D^{2}p^{\epsilon}}{\D v_{\perp}\,\D\alpha} - \cfrac{1}{2\pi}\,\int_{0}^{2\pi} \sin\alpha \,\cfrac{\D^{2}p^{\epsilon}}{\D v_{\perp}\,\D\alpha}\, d\alpha + \cfrac{1}{v_{\perp}}\,\cos\alpha\,\cfrac{\D^{2}p^{\epsilon}}{\D\alpha^{2}} - \cfrac{1}{2\pi\,v_{\perp}}\,\int_{0}^{2\pi} \cos\alpha \,\cfrac{\D^{2}p^{\epsilon}}{\D\alpha^{2}}\, d\alpha
\end{array}
\right) \\
&+ \left(
\begin{array}{c}
v_{\perp} \\ -v_{||}\,B_{3}^{\epsilon} \\ v_{||}\,B_{2}^{\epsilon}
\end{array}
\right) \cdot \left(
\begin{array}{c}
\displaystyle (\cos\alpha\,B_{3}^{\epsilon}-\sin\alpha\,B_{2}^{\epsilon})\,\cfrac{\D^{2}p^{\epsilon}}{\D v_{||}\,\D\alpha} - \cfrac{1}{2\pi}\,\int_{0}^{2\pi} (\cos\alpha\,B_{3}^{\epsilon}-\sin\alpha\,B_{2}^{\epsilon})\,\cfrac{\D^{2}p^{\epsilon}}{\D v_{||}\,\D\alpha} \, d\alpha \\
\displaystyle \cos\alpha \, \cfrac{\D^{2}p^{\epsilon}}{\D v_{\perp}\,\D\alpha} - \cfrac{1}{v_{\perp}} \, \sin\alpha \, \cfrac{\D^{2}p^{\epsilon}}{\D\alpha^{2}} - \cfrac{1}{2\pi} \, \int_{0}^{2\pi} \Big( \cos\alpha \, \cfrac{\D^{2}p^{\epsilon}}{\D v_{\perp}\,\D\alpha} - \cfrac{1}{v_{\perp}} \, \sin\alpha \, \cfrac{\D^{2}p^{\epsilon}}{\D\alpha^{2}} \Big) \, d\alpha \\
\displaystyle \sin\alpha \, \cfrac{\D^{2}p^{\epsilon}}{\D v_{\perp}\,\D\alpha} + \cfrac{1}{v_{\perp}} \, \cos\alpha \, \cfrac{\D^{2}p^{\epsilon}}{\D\alpha^{2}} - \cfrac{1}{2\pi} \, \int_{0}^{2\pi} \Big( \sin\alpha \, \cfrac{\D^{2}p^{\epsilon}}{\D v_{\perp}\,\D\alpha} + \cfrac{1}{v_{\perp}} \, \cos\alpha \, \cfrac{\D^{2}p^{\epsilon}}{\D\alpha^{2}} \Big) \, d\alpha 
\end{array}
\right) \\
&- (B_{||}^{\epsilon}+\cfrac{1}{\epsilon}) \, \cfrac{\D^{2}p^{\epsilon}}{\D\alpha^{2}} \\
&\qquad \qquad = - \left(
\begin{array}{c}
0 \\ v_{\perp}\,\cos\alpha \\ v_{\perp}\,\sin\alpha
\end{array}
\right) \cdot \Nabla_{\mathbf{x}}m - \mathbf{E}^{\epsilon} \cdot \left(
\begin{array}{c}
0 \\ \cos\alpha \, \cfrac{\D m}{\D v_{\perp}} \\ \sin\alpha \, \cfrac{\D m}{\D v_{\perp}}
\end{array}
\right) \\
&\qquad \qquad \qquad - \left(
\begin{array}{c}
v_{\perp}\,(\cos\alpha\,B_{3}^{\epsilon}-\sin\alpha\,B_{2}^{\epsilon}) \\ -v_{||}\,B_{3}^{\epsilon} \\ v_{||}\,B_{2}^{\epsilon}
\end{array}
\right) \cdot \left(
\begin{array}{c}
\cfrac{\D m}{\D v_{||}} \\ \cos\alpha \, \cfrac{\D m}{\D v_{\perp}} \\ \sin\alpha \, \cfrac{\D m}{\D v_{\perp}}
\end{array}
\right) \\
&\qquad \qquad \qquad - \left(
\begin{array}{c}
0 \\ v_{\perp}\,\cos\alpha \\ v_{\perp}\,\sin\alpha
\end{array}
\right) \cdot \Nabla_{\mathbf{x}}m_{1}^{\epsilon} - \mathbf{E}^{\epsilon} \cdot \left(
\begin{array}{c}
0 \\ \cos\alpha \, \cfrac{\D m_{1}^{\epsilon}}{\D v_{\perp}} \\ \sin\alpha \, \cfrac{\D m_{1}^{\epsilon}}{\D v_{\perp}}
\end{array}
\right) \\
&\qquad \qquad \qquad - \left(
\begin{array}{c}
v_{\perp}\,(\cos\alpha\,B_{3}^{\epsilon}-\sin\alpha\,B_{2}^{\epsilon}) \\ -v_{||}\,B_{3}^{\epsilon} \\ v_{||}\,B_{2}^{\epsilon}
\end{array}
\right) \cdot \left(
\begin{array}{c}
\cfrac{\D m_{1}^{\epsilon}}{\D v_{||}} \\ \cos\alpha \, \cfrac{\D m_{1}^{\epsilon}}{\D v_{\perp}} \\ \sin\alpha \, \cfrac{\D m_{1}^{\epsilon}}{\D v_{\perp}}
\end{array}
\right) \, .
\end{split}
\end{equation}

Concerning the initial conditions for $m_{1}^{\epsilon}$ and $p^{\epsilon}$, we invoke the initial distribution $f_{0}$ as follows:
\begin{equation}
\begin{split}
f_{0}(\mathbf{x},v_{||}\,\mathbf{e}_{1} + v_{\perp}\,\cos\alpha\,\mathbf{e}_{2} &+ v_{\perp}\,\sin\alpha\,\mathbf{e}_{3}) \\
&= \cfrac{1}{2\pi} \int_{0}^{2\pi} f_{0}(\mathbf{x},v_{||}\,\mathbf{e}_{1} + v_{\perp}\,\cos\theta\,\mathbf{e}_{2} + v_{\perp}\,\sin\theta\,\mathbf{e}_{3}) \, d\theta \\
&\qquad + \Bigg[ f_{0}(\mathbf{x},v_{||}\,\mathbf{e}_{1} + v_{\perp}\,\cos\alpha\,\mathbf{e}_{2} + v_{\perp}\,\sin\alpha\,\mathbf{e}_{3}) \\
&\qquad \qquad - \cfrac{1}{2\pi} \int_{0}^{2\pi} f_{0}(\mathbf{x},v_{||}\,\mathbf{e}_{1} + v_{\perp}\,\cos\theta\,\mathbf{e}_{2} + v_{\perp}\,\sin\theta\,\mathbf{e}_{3}) \, d\theta \Bigg] \\
&= m_{0}(\mathbf{x},v_{||},v_{\perp}) \\
&\qquad + \big[ f_{0}(\mathbf{x},v_{||}\,\mathbf{e}_{1} + v_{\perp}\,\cos\alpha\,\mathbf{e}_{2} + v_{\perp}\,\sin\alpha\,\mathbf{e}_{3}) - m_{0}(\mathbf{x},v_{||},v_{\perp})\big]. 
\end{split}
\end{equation}
Since $m_{|_{t\,=\,0}} = m_{0}$, we have
\begin{equation}
\begin{split}
m_{1}^{\epsilon}(t=0,\mathbf{x},v_{||},&v_{\perp}) + \cfrac{\D p_{\epsilon}}{\D\alpha}(t=0,\mathbf{x},v_{||},v_{\perp},\alpha) \\
&= f_{0}(\mathbf{x},v_{||}\,\mathbf{e}_{1} + v_{\perp}\,\cos\alpha\,\mathbf{e}_{2} + v_{\perp}\,\sin\alpha\,\mathbf{e}_{3}) - m_{0}(\mathbf{x},v_{||},v_{\perp}).
\end{split}
\end{equation}
By integrating this equation in $\alpha$ over $[0,2\pi]$, we find
\begin{equation} \label{proj_Ker_weak_init}
m_{1}^{\epsilon}(t=0,\mathbf{x},v_{||},v_{\perp}) = 0.
\end{equation}
Then we have
\begin{equation} \label{proj_Im_weak_init}
p^{\epsilon}(t=0,\mathbf{x},v_{||},v_{\perp},\alpha) = \int_{0}^{\alpha} f_{0}(\mathbf{x},v_{||}\,\mathbf{e}_{1} + v_{\perp}\,\cos\theta\,\mathbf{e}_{2} + v_{\perp}\,\sin\theta\,\mathbf{e}_{3}) \, d\theta - \alpha\,m_{0}(\mathbf{x},v_{||},v_{\perp}) \, .
\end{equation}
\begin{remark}\label{classical-Mm}
On the one hand, we mention that we have existence and uniqueness for solutions of the classical
Macro-Micro model \eqref{eq_m}, \eqref{proj_Ker_weak}, \eqref{proj_Im_weak}, \eqref{proj_Ker_weak_init},
\eqref{proj_Im_weak_init}. Indeed, the existence was proved by construction in the previous sections;
more precisely, function $m$ exists as the weak-$*$ limit of $(f^{\epsilon})_{\epsilon}$ and the existence
of functions $m_{1}^{\epsilon}$ and $n^{\epsilon}$ is guaranteed by Appendix A. Then, the linear transport
problem \eqref{eq_m} clearly has a unique solution. As for the linear
system \eqref{proj_Ker_weak}, \eqref{proj_Im_weak}, by classical results in linear partial
differential equations, we can prove that the solution of \eqref{proj_Ker_weak},
\eqref{proj_Im_weak}, \eqref{proj_Ker_weak_init}, \eqref{proj_Im_weak_init} is unique.

On the other hand, it can be easily shown that the classical Macro-Micro model \eqref{eq_m},
\eqref{proj_Ker_weak}, \eqref{proj_Im_weak}, \eqref{proj_Ker_weak_init}, \eqref{proj_Im_weak_init}
is equivalent to the original problem \eqref{Vlasov_rescaled_intro}.
\end{remark}

\begin{remark}[{\bf Conceiving Asymptotic-Preserving schemes}]\label{rem:apCMM}
In the light of Remark~\ref{classical-Mm}, the reformulated problem \eqref{eq_m},
\eqref{proj_Ker_weak}, \eqref{proj_Im_weak}, \eqref{proj_Ker_weak_init},
\eqref{proj_Im_weak_init} is Asymptotic-Preserving since formally, we obtain from 
\eqref{decompo_feps_weak} and \eqref{Vlasov_with_mm1n} that $m_{1}^{\epsilon} \to 0$ and
$n^{\epsilon} \to 0$ when $\epsilon \to 0$ and thus, the classical Macro-Micro model
reduces to \eqref{eq_m}. 

A time discretization of this reformulation can be derived following
\cite{Bennoune, Crouseilles, Lemou-Mieussens}. It is based on a
semi-implicit discretization in which the stiffest term in equation \eqref{proj_Im_weak}
is considered implicit to ensure stability as $\epsilon$ goes to $0$. The
so-obtained numerical scheme enjoys the Asymptotic-Preserving property since it is
consistent with \eqref{Vlasov_rescaled_intro} when $\epsilon > 0$ and consistent
with its weak-$*$ limit \eqref{eq_f_weak} when $\epsilon$ goes to $0$. This scheme
would play the role of problem $E_{\Delta z}^{\epsilon}\,u_{\Delta z}^{\epsilon} = 0$ in
the diagrams shown in \eqref{AP_diagram} and \eqref{diagram_TSAPS}.
%
%Therefore, we expect that any reasonable discretization of this reformulation will give
%an Aymptotic-Preserving scheme, {\em i.e.} a scheme consistent with
%\eqref{Vlasov_rescaled_intro} when $\epsilon > 0$ and consistent with its weak-$*$ limit
%\eqref{eq_f_weak} when $\epsilon$ is small. This scheme would play the role of problem
%$E_{\Delta z}^{\epsilon}\,u_{\Delta z}^{\epsilon} = 0$ in the diagrams shown in
%\eqref{AP_diagram} and \eqref{diagram_TSAPS}.
\end{remark}

\section{Two-Scale Macro-Micro decomposition: preliminaries}
\label{SecTSMMDP} % \ref{SecTSMMDP}
\setcounter{equation}{0}

The aim exposed in the first paragraph of Section \ref{Sec:Mmdeco} will now be changed
in order to obtain the Two-Scale limit model \eqref{eq_G} when $\epsilon\to0$. Eventually, 
we will see in Section \ref{SecTSMMvsMM}, that the Two-Scale Macro-Micro decomposition 
described in the following sections, will lead by integration in $\tau$ to the classical
Macro-Micro decomposition.

From now on, we assume that the input electric field and the input magnetic field do not
depend on $\epsilon$, \textit{i.e.} we take
\begin{equation}
\mathbf{E}^{\epsilon} = \mathbf{E} \, , \qquad \mathbf{B}^{\epsilon} = \mathbf{B} \, .
\end{equation}

\subsection{Complements on convergence results}

\indent Before entering in the core of the Two-Scale Macro-Micro decomposition, we integrate some knowledge which can be deduced from Fr\'enod, Raviart \& Sonnendr\"ucker \cite{Raviart} where the asymptotic expansion of $f^{\epsilon}$ is presented. Using hypotheses (\ref{hypotheses_f0}) and (\ref{hypotheses_EB}) and adding some regularity assumptions for $\mathbf{E}$ and $\mathbf{B}$, we can claim that
\begin{equation}\label{1stord_conv}
\cfrac{1}{\epsilon}\,\Big( f^{\epsilon}(t,\mathbf{x},\mathbf{v}) - F(t,\cfrac{t}{\epsilon},\mathbf{x},\mathbf{v})\Big) 
\textnormal{ ~~Two-Scale converges to~~ } \widetilde{F}(t,\tau,\mathbf{x},\mathbf{v}) \textnormal{ ~~as $\epsilon \to 0$,}
\end{equation}
and that $\widetilde{F}$ expresses in terms of other functions $\widetilde{G}$ and $l$ in the following way:
\begin{equation}
\widetilde{F}(t,\tau,\mathbf{x},\mathbf{v}) = \widetilde G\big(t,\mathbf{x},\ucar(\tau,\mathbf{v})\big) + l(t,\tau,\mathbf{x},\mathbf{v}) \, ,
\end{equation}
where
\begin{equation} \label{def_l}
\begin{split}
l(t,\tau,\mathbf{x},\mathbf{v}) &= \big( r(\tau+\cfrac{1}{4})-r(\cfrac{1}{4}) \big) \, \mathbf{v} \cdot \Nabla_{\mathbf{x}_{\perp}}G\big(t,\mathbf{x},\ucar(\tau,\mathbf{v})\big) \\
&\qquad + \Big[ \big( r(\tau+\cfrac{1}{4})-r(\cfrac{1}{4}) \big) \, \mathbf{E}(t,\mathbf{x}) + \ucar(\tau,\mathbf{v}) \times \Big(\big( r(\tau+\cfrac{1}{4})-r(\cfrac{1}{4}) \big) \, \mathbf{B}(t,\mathbf{x})\Big) \Big] \\
&\qquad \qquad \qquad \qquad \qquad \qquad \qquad \qquad \qquad \qquad \qquad \qquad \qquad \cdot \Nabla_{\mathbf{u}_{\perp}} G\big(t,\mathbf{x},\ucar(\tau,\mathbf{v})\big) \,,
\end{split}
\end{equation}
and where $\widetilde{G}$ satisfies an equation of the form
\begin{equation}\label{G_tilde}
\cfrac{\D \widetilde{G}}{\D t} + u_{||} \, \cfrac{\D \widetilde{G}}{\D x_{||}} + \big( \mathbf{E}_{||} + \mathbf{u} \cdot \mathbf{B}_{||} \big) \cdot \Nabla_{\mathbf{v}} \widetilde{G} = RHS(t,\mathbf{x},\mathbf{u},\mathbf{E},\mathbf{B},G) \, ,
\end{equation}
where the right hand side can be explicitly computed as in Theorem~4.2 of \cite{Raviart}.
In these equations, $G$ is the solution of (\ref{eq_G}), $r$ is defined in (\ref{mat_rotation}), and the gradients $\Nabla_{\mathbf{x}_{\perp}}$ and $\Nabla_{\mathbf{u}_{\perp}}$ are defined by
\begin{equation}
\Nabla_{\mathbf{x}_{\perp}} = \left(
\begin{array}{c}
0 \\ \cfrac{\D}{\D x_{2}} \\ \cfrac{\D}{\D x_{3}}
\end{array}
\right) \, , \qquad \Nabla_{\mathbf{u}_{\perp}} = \left(
\begin{array}{c}
0 \\ \cfrac{\D}{\D u_{2}} \\ \cfrac{\D}{\D u_{3}}
\end{array}
\right) \, .
\end{equation}

\subsection{Sought shape of $f^{\epsilon}$}

\indent As announced in the introduction, we want to use a shape given by (\ref{feps_decompo_intro})-(\ref{decompo_intro_2}) for $f^{\epsilon}$. \\
\indent We clearly have that operator
\begin{equation} \label{def_op}
\cfrac{\D}{\D\tau} + (\mathbf{v} \times \mathcal{M}) \cdot \Nabla_{\mathbf{v}} \, : \, L^{\infty}\big(0,T; L_{\#_{1}}^{\infty}(\R^+; L^{2}(\R^{6}))\big) \, \to \, L^{\infty}\big(0,T; L_{\#_{1}}^{\infty}(\R^+; L^{2}(\R^{6}))\big) \, ,
\end{equation}
is anti-symmetric, non bounded and with closed range (see Appendix B) and satisfies
\begin{equation} \label{decompo_orthogonal}
\textnormal{Ker}\big(\cfrac{\D}{\D\tau} + (\mathbf{v} \times \mathcal{M}) \cdot \Nabla_{\mathbf{v}} \big) \oplus \textnormal{Im}\big(\cfrac{\D}{\D\tau} + (\mathbf{v} \times \mathcal{M}) \cdot \Nabla_{\mathbf{v}} \big) = L^{\infty}\big(0,T; L_{\#_{1}}^{\infty}(\R^+; L^{2}(\R^{6}))\big) \, .
\end{equation}
On another hand, we have 
\begin{gather}
F \in \textnormal{Ker}\big(\cfrac{\D}{\D\tau} + (\mathbf{v} \times \mathcal{M}) \cdot \Nabla_{\mathbf{v}} \big).
\end{gather}
 Furthermore, we can easily prove that the function $l$ defined in \eqref{def_l} is such that
 \begin{gather}
 l\in \textnormal{Im}\big(\cfrac{\D}{\D\tau} + (\mathbf{v} \times \mathcal{M}) \cdot \Nabla_{\mathbf{v}} \big).
 \end{gather}
 Indeed, any regular function in $\textnormal{Ker}\big(\cfrac{\D}{\D\tau} + (\mathbf{v} \times \mathcal{M}) \cdot \Nabla_{\mathbf{v}} \big)$ 
 reads $\gamma(t,\mathbf{x},\ucar(\tau,\mathbf{v}))$ for some $\gamma \in L^{\infty}\big(0,T;L^{2}(\R^{6})\big)$. Multiplying $l(t,\tau,\mathbf{x},\mathbf{v})$ by $\gamma(t,\mathbf{x},\ucar(\tau,\mathbf{v}))$, integrating over $\R^{6} \times [0,1]$ and performing the change of variables
\begin{equation}
\mathbf{v} \, \mapsto \, \mathbf{u} = \ucar(\tau,\mathbf{v}) \, ,
\end{equation}
we obtain
\begin{equation}
\begin{split}
\int_{\R^{6}} \gamma(t,\mathbf{x},\mathbf{u})\, \Bigg\{ &\Big[ \Bigg( \int_{0}^{1} r(\tau-\cfrac{1}{4})-r(\cfrac{1}{4}) \, d\tau\Bigg) \, \mathbf{u} \Big] \cdot \Nabla_{\mathbf{x}_{\perp}} G(t,\mathbf{x},\mathbf{u}) \\
& - \Bigg[ \Bigg( \int_{0}^{1} r(\tau-\cfrac{1}{4})-r(\cfrac{1}{4})\, d\tau\Bigg)\, \mathbf{E}(t,\mathbf{x}) \\
&\qquad + \mathbf{u} \times \Big[ \Bigg( \int_{0}^{1} r(\tau-\cfrac{1}{4})-r(\cfrac{1}{4}) \, d\tau\Bigg)\,\mathbf{B}(t,\mathbf{x}) \Big] \Bigg] \cdot \Nabla_{\mathbf{u}_{\perp}}G(t,\mathbf{x},\mathbf{u}) \Bigg\} \, d\mathbf{x}\,d\mathbf{u} = 0 \, ,
\end{split}
\end{equation}
because the integrals over $[0,1]$ all vanish. \\

Now, we integrate every remarks we just did. We also notice that a decomposition of the type (\ref{feps_decompo_intro}) cannot be unique because of choices concerning the place where oscillations are put: in the superscript $\epsilon$ or in variable $\cfrac{t}{\epsilon}$. This will also prescribe what we set now: we look for 
\begin{equation}
\begin{array}{rclcl}
G_{1}^{\epsilon} &=& G_{1}^{\epsilon}(t,\mathbf{x},\mathbf{u})
&\in & L^{\infty}\big(0,T;L^{2}(\R^{6})\big), \\
k^{\epsilon} &=& k^{\epsilon}(t,\tau,\mathbf{x},\mathbf{v})
&\in & L^{\infty}\big(0,T;L_{\#_{1}}^{\infty}(\R^+; L^{2}(\R^{6}))\big),
\end{array}
\end{equation}
such that, for any $\tau$,
\begin{equation} \label{decompo_feps_TS}
\begin{split}
f^{\epsilon}(t,\mathbf{x},\mathbf{v}) &= G\big(t,\mathbf{x},\ucar(\tau,\mathbf{v})\big) + \epsilon \, G_{1}^{\epsilon}\big(t,\mathbf{x},\ucar(\tau,\mathbf{v})\big) \\
&\qquad + \epsilon\,l(t,\tau,\mathbf{x},\mathbf{v}) + \epsilon \left( \cfrac{\D k^{\epsilon}}{\D\tau}(t,\tau,\mathbf{x},\mathbf{v}) + (\mathbf{v}\times\mathcal{M}) \cdot \Nabla_{\mathbf{v}}k^{\epsilon}(t,\tau,\mathbf{x},\mathbf{v}) \right).
\end{split}
\end{equation}
By property \eqref{decompo_orthogonal} we know that $G_{1}^{\epsilon}$ and $k^{\epsilon}$ exist and are unique.
From now on, we shall denote
\begin{equation}
 (G \circ {\ucar})(t,\tau,\mathbf{x},\mathbf{v}) = G\big(t,\mathbf{x},\ucar(\tau,\mathbf{v})\big),
\end{equation}
leading, for instance, to the writing of $F = G \circ {\ucar}$. With this notation, we can claim that
\begin{equation}
\begin{array}{rcl}
(G \circ {\ucar}) + \epsilon \, (G_{1}^{\epsilon} \circ {\ucar}) &\in& \textnormal{Ker}\big(\cfrac{\D}{\D\tau} + (\mathbf{v} \times \mathcal{M}) \cdot \Nabla_{\mathbf{v}} \big), \\
\epsilon\,l + \epsilon \Big(\cfrac{\D k^{\epsilon}}{\D\tau} + (\mathbf{v}\times\mathcal{M}) \cdot \Nabla_{\mathbf{v}}k^{\epsilon} \Big)&\in& \textnormal{Im}\big(\cfrac{\D}{\D\tau} + (\mathbf{v} \times \mathcal{M}) \cdot \Nabla_{\mathbf{v}} \big),
\end{array}
\end{equation}
and, as a consequence of (\ref{decompo_orthogonal}), decomposition (\ref{decompo_feps_TS}) exists and is unique since $G$ and $l$ have already been identified as the unique couple satisfying (\ref{eq_G}) and (\ref{def_l}). \\

\indent In the previous section, when building the usual Macro-Micro decomposition, we projected the equation satisfied by $f^{\epsilon}$ onto the kernel and the range of the involved operator. Here, this cannot be done. Indeed, here, the involved operator is $\frac{\D}{\D\tau} + (\mathbf{v} \times \mathcal{M}) \cdot \Nabla_{\mathbf{v}}$ and the function which has to be used in equation (\ref{Vlasov_rescaled_intro}) in order to extract some informations on $G_{1}^{\epsilon}$ and $k^{\epsilon}$ is
\begin{equation}\label{moregeneral_decompo_feps_TS}
\begin{split}
f^{\epsilon}(t,\mathbf{x},\mathbf{v}) &= (G\circ{\ucar})(t,\cfrac{t}{\epsilon},\mathbf{x},\mathbf{v})
+ \epsilon \, (G_{1}^{\epsilon}\circ{\ucar})(t,\cfrac{t}{\epsilon},\mathbf{x},\mathbf{v}) \\
&\qquad + \epsilon\,l(t,\cfrac{t}{\epsilon},\mathbf{x},\mathbf{v}) + \epsilon\left(
\cfrac{\D k^{\epsilon}}{\D\tau}(t,\cfrac{t}{\epsilon},\mathbf{x},\mathbf{v}) + (\mathbf{v}\times
\mathcal{M}) \cdot \Nabla_{\mathbf{v}}k^{\epsilon}(t,\cfrac{t}{\epsilon},\mathbf{x},\mathbf{v})\right),
\end{split}
\end{equation}
which corresponds to \eqref{decompo_feps_TS} with $\tau = \frac{t}{\epsilon}$.

\subsection{The pathway towards Two-Scale Macro and Micro equations}

In this section we summarize the way we follow to obtain the equations for $G_{1}^{\epsilon}$
and $k^{\epsilon}$. They make up the Two-Scale Macro and Micro equations.
Once $\tau$ was replaced by $\cfrac{t}{\epsilon}$ in \eqref{decompo_feps_TS}, we face
with functional spaces where
operator (\ref{def_op}) makes no sense. In order to bypass this difficulty, we use a
weak formulation of (\ref{Vlasov_rescaled_intro}) with oscillating test functions
\begin{equation}
(\psi)^{\epsilon} = (\psi)^{\epsilon}(t,\mathbf{x},\mathbf{v}) = \psi(t,\frac{t}{\epsilon},
\mathbf{x},\mathbf{v}),
\end{equation}
where $\psi = \psi(t,\tau,\mathbf{x},\mathbf{v})$ is regular, $1$-periodic in $\tau$ and with compact support on $[0,T) \times \R^{3} \times \R^{3}$ for any fixed $\tau\in [0,1]$. Writing
\begin{equation} \label{link_F1G1}
(F_{1}^{\epsilon})^{\epsilon} = (F_{1}^{\epsilon})^{\epsilon}(t,\mathbf{x},\mathbf{v}) = F_{1}^{\epsilon}(t,\frac{t}{\epsilon},\mathbf{x},\mathbf{v}) = (G_{1}^{\epsilon} \circ {\ucar})(t,\frac{t}{\epsilon},\mathbf{x},\mathbf{v}) \, ,
\end{equation}
and using the same convention for other functions, the weak formulation with oscillating test functions reads
\begin{equation} \label{Vlasov_decomposed_weak_TS}
\begin{split}
\int_{0}^{T} \int_{\R^{6}} \Big[ (F)^{\epsilon} &+ \epsilon\,(F_{1}^{\epsilon})^{\epsilon} + \epsilon\,(l)^{\epsilon} + \epsilon (h^{\epsilon})^{\epsilon}\Big] \, \Big[ \big( \cfrac{\D \psi}{\D t}\big)^{\epsilon} + \cfrac{1}{\epsilon}\, \big( \cfrac{\D \psi}{\D \tau}\big)^{\epsilon} + \mathbf{v} \cdot (\Nabla_{\mathbf{x}}\psi)^{\epsilon} \\
&+ \big( \mathbf{E} + \mathbf{v} \times (\mathbf{B} + \cfrac{\mathcal{M}}{\epsilon})\big) \cdot (\Nabla_{\mathbf{v}}\psi)^{\epsilon} \Big] \, d\mathbf{x}\,d\mathbf{v} \, dt = -\int_{\R^{6}} f_{0}(\mathbf{x},\mathbf{v}) \, \psi(0,0,\mathbf{x},\mathbf{v}) \, d\mathbf{x} \, d\mathbf{v} \, ,
\end{split}
\end{equation}
where $F$ is linked with $G$ by (\ref{link_FG}), $F_{1}^{\epsilon}$ with $G_{1}^{\epsilon}$ by (\ref{link_F1G1}), $l$ is defined in (\ref{def_l}) and $h^{\epsilon}$ is linked with $k^{\epsilon}$ by
\begin{equation} \label{link_kepsheps}
h^{\epsilon} = \cfrac{\D k^{\epsilon}}{\D\tau} + (\mathbf{v} \times \mathcal{M})\cdot\Nabla_{\mathbf{v}}k^{\epsilon} \, .
\end{equation}
The way to deduce the equation for $G_{1}^{\epsilon}$ and for $k^{\epsilon}$ consists in using, in a first computation, test functions $\psi \in \textnormal{Ker}\big(\cfrac{\D}{\D\tau} + (\mathbf{v} \times \mathcal{M}) \cdot \Nabla_{\mathbf{v}} \big)$, \textit{i.e.} of the form
\begin{equation} \label{def_psi_Ker}
\psi = (\gamma \circ {\ucar}) \, ,
\end{equation}
for regular functions $\gamma = \gamma(t,\mathbf{x},\mathbf{u})$ compactly supported in $[0,T) \times \R^{3} \times \R^{3}$. Then, in a second computation, test functions belonging to $\textnormal{Im}\big(\cfrac{\D}{\D\tau} + (\mathbf{v} \times \mathcal{M}) \cdot \Nabla_{\mathbf{v}} \big)$, or writing
\begin{equation} \label{def_psi_Im}
\psi = \cfrac{\D \kappa}{\D \tau} + (\mathbf{v} \times \mathcal{M}) \cdot \Nabla_{\mathbf{v}}\kappa \, ,
\end{equation}
for some regular functions $\kappa = \kappa(t,\tau,\mathbf{x},\mathbf{v})$ with compact support in $[0,T) \times \R^{3} \times \R^{3}$ for every $\tau \in [0,1]$, will be chosen.

\section{The Two-Scale Macro equation}
\label{SecTMaE} % \ref{SecTMaE}
\setcounter{equation}{0}

We begin by going further in the writing of weak formulation (\ref{Vlasov_decomposed_weak_TS}) by using the link between $G$ and $l$. We recall this weak formula:
\begin{equation} \label{Proj_Ker_TS_1}
\begin{split}
\int_{0}^{T} &\int_{\R^{6}} (F)^{\epsilon}\, \Big[ \cfrac{\D (\psi)^{\epsilon}}{\D t} + \mathbf{v} \cdot \Nabla_{\mathbf{x}}(\psi)^{\epsilon} + \big( \mathbf{E} + \mathbf{v} \times (\mathbf{B} + \cfrac{\mathcal{M}}{\epsilon})\big) \cdot \Nabla_{\mathbf{v}}(\psi)^{\epsilon} \Big] \, d\mathbf{x}\,d\mathbf{v} \, dt \\
&+ \epsilon\, \int_{0}^{T} \int_{\R^{6}} (F_{1}^{\epsilon})^{\epsilon}\, \Big[ (\cfrac{\D \psi}{\D t})^{\epsilon} + \cfrac{1}{\epsilon}\,(\cfrac{\D\psi}{\D\tau})^{\epsilon} + \mathbf{v} \cdot (\Nabla_{\mathbf{x}}\psi)^{\epsilon} + \big( \mathbf{E} + \mathbf{v} \times (\mathbf{B} + \cfrac{\mathcal{M}}{\epsilon})\big) \cdot (\Nabla_{\mathbf{v}}\psi)^{\epsilon} \Big] \, d\mathbf{x}\,d\mathbf{v} \, dt \\
&+ \epsilon\, \int_{0}^{T} \int_{\R^{6}} (l)^{\epsilon}\, \Big[ \cfrac{\D (\psi)^{\epsilon}}{\D t} + (\mathbf{v} \times \cfrac{\mathcal{M}}{\epsilon}) \cdot \Nabla_{\mathbf{v}}(\psi)^{\epsilon} \Big] \, d\mathbf{x}\,d\mathbf{v} \, dt \\
&+\epsilon\, \int_{0}^{T} \int_{\R^{6}} (l)^{\epsilon} \, \Big[ \mathbf{v} \cdot (\Nabla_{\mathbf{x}}\psi)^{\epsilon} + (\mathbf{E}+\mathbf{v} \times \mathbf{B}) \cdot (\Nabla_{\mathbf{v}}\psi)^{\epsilon} \Big] \, d\mathbf{x} \, d\mathbf{v} \, dt \\
&+\epsilon\, \int_{0}^{T} \int_{\R^{6}} (h^{\epsilon})^{\epsilon}\, \Big[ (\cfrac{\D \psi}{\D t})^{\epsilon} + \cfrac{1}{\epsilon}\,(\cfrac{\D\psi}{\D\tau})^{\epsilon} + \mathbf{v} \cdot (\Nabla_{\mathbf{x}}\psi)^{\epsilon} + \big( \mathbf{E} + \mathbf{v} \times (\mathbf{B} + \cfrac{\mathcal{M}}{\epsilon})\big) \cdot (\Nabla_{\mathbf{v}}\psi)^{\epsilon} \Big] \, d\mathbf{x}\,d\mathbf{v} \, dt \\
&\qquad \qquad = -\int_{\R^{6}} f_{0}(\mathbf{x},\mathbf{v}) \, \psi(0,0,\mathbf{x},\mathbf{v}) \, d\mathbf{x} \, d\mathbf{v} \, .
\end{split}
\end{equation}
Performing an integration by parts in the first term of \eqref{Proj_Ker_TS_1} gives
\begin{equation} \label{term_1_Ker}
\begin{split}
\int_{0}^{T} &\int_{\R^{6}} (F)^{\epsilon}\, \Big[ \cfrac{\D (\psi)^{\epsilon}}{\D t} + \mathbf{v} \cdot \Nabla_{\mathbf{x}}(\psi)^{\epsilon} + \big( \mathbf{E} + \mathbf{v} \times (\mathbf{B} + \cfrac{\mathcal{M}}{\epsilon})\big) \cdot \Nabla_{\mathbf{v}}(\psi)^{\epsilon} \Big] \, d\mathbf{x}\,d\mathbf{v} \, dt \\
&= -\int_{0}^{T} \int_{\R^{6}} \Big[ (\cfrac{\D F}{\D t})^{\epsilon} + \cfrac{1}{\epsilon}\,(\cfrac{\D F}{\D\tau})^{\epsilon} + \mathbf{v} \cdot (\Nabla_{\mathbf{x}}F)^{\epsilon} + \big( \mathbf{E} + \mathbf{v} \times (\mathbf{B} + \cfrac{\mathcal{M}}{\epsilon})\big) \cdot (\Nabla_{\mathbf{v}}F)^{\epsilon} \Big] \, (\psi)^{\epsilon}\, d\mathbf{x} \, d\mathbf{v} \, dt \\
&\qquad \qquad - \int_{\R^{6}} F(0,0,\mathbf{x},\mathbf{v}) \, \psi(0,0,\mathbf{x},\mathbf{v}) \, d\mathbf{x} \, d\mathbf{v} \\
&= -\int_{0}^{T} \int_{\R^{6}} \Big[ (\cfrac{\D F}{\D t})^{\epsilon} + \mathbf{v} \cdot (\Nabla_{\mathbf{x}}F)^{\epsilon} + (\mathbf{E} + \mathbf{v} \times \mathbf{B}) \cdot (\Nabla_{\mathbf{v}}F)^{\epsilon} \Big] \, (\psi)^{\epsilon}\, d\mathbf{x} \, d\mathbf{v} \, dt \\
&\qquad \qquad - \int_{\R^{6}} f_{0}(\mathbf{x},\mathbf{v}) \, \psi(0,0,\mathbf{x},\mathbf{v}) \, d\mathbf{x} \, d\mathbf{v} \, ,
\end{split}
\end{equation}
since $F \in \textnormal{Ker}(\cfrac{\D}{\D\tau}+(\mathbf{v} \times \mathcal{M})\cdot\Nabla_{\mathbf{v}})$. We invoke the link between $F$ and $G$, and we consider the change of variables 
\begin{gather}
\label{DefFctTest222}
\mathbf{v}\mapsto\mathbf{u} = \ucar(\frac{t}{\epsilon},\mathbf{v}) 
\text{ which is equivalent to } \mathbf{v} = \ucar(-\frac{t}{\epsilon},\mathbf{u})={\ucar}^{-1}(\frac{t}{\epsilon},\mathbf{u}). 
\end{gather}
We also use the following notation
\begin{equation}
(\psi \circ{\ucar}^{-1})(t,\tau,\mathbf{x},\mathbf{u})= \psi\big(t,\tau,\mathbf{x},{\ucar}^{-1}
(\tau,\mathbf{u})\big) = \psi\big(t,\tau,\mathbf{x},\ucar(-\tau,\mathbf{u})\big) \, .
\end{equation}
Since a direct computation yields
\begin{equation}
(\Nabla_{\mathbf{v}}F)^{\epsilon} = \Nabla_{\mathbf{v}}(F)^{\epsilon} = \Nabla_{\mathbf{v}}(G \circ {\ucar})^{\epsilon} = r(-\frac{t}{\epsilon}) \big((\Nabla_{\mathbf{u}} G) \circ {\ucar}\big)^{\epsilon} \, ,
\end{equation}
equation (\ref{term_1_Ker}) reads
\begin{equation}
\begin{split}
\int_{0}^{T} &\int_{\R^{6}} (F)^{\epsilon}\, \Big[ \cfrac{\D (\psi)^{\epsilon}}{\D t} + \mathbf{v} \cdot \Nabla_{\mathbf{x}}(\psi)^{\epsilon} + \big( \mathbf{E} + \mathbf{v} \times (\mathbf{B} + \cfrac{\mathcal{M}}{\epsilon})\big) \cdot \Nabla_{\mathbf{v}}(\psi)^{\epsilon} \Big] \, d\mathbf{x}\,d\mathbf{v} \, dt \\
&= - \int_{0}^{T} \int_{\R^{6}} \Big[ \cfrac{\D G}{\D t} + \big( r(-\frac{t}{\epsilon})\,\mathbf{u}\big) \cdot \Nabla_{\mathbf{x}} G + \big( r(\frac{t}{\epsilon})\, \big[ \mathbf{E} + \big(r(-\frac{t}{\epsilon})\,\mathbf{u}\big) \times \mathbf{B} \big] \cdot \Nabla_{\mathbf{u}}G \Big] \\
&\qquad \qquad \qquad \qquad \qquad \qquad \qquad \qquad \qquad \qquad \qquad \qquad \qquad \qquad \qquad (\psi \circ{\ucar}^{-1})^{\epsilon} \, d\mathbf{x} \, d\mathbf{u} \, dt \\
&\qquad - \int_{\R^{6}} f_{0}(\mathbf{x},\mathbf{v})\,\psi(0,0,\mathbf{x},\mathbf{v}) \, d\mathbf{x} \, d\mathbf{v} \\
&= - \int_{0}^{T} \int_{\R^{6}} \Big[ \cfrac{\D G}{\D t} + \big( r(-\frac{t}{\epsilon})\,\mathbf{u}\big) \cdot \Nabla_{\mathbf{x}} G + \big[r(\frac{t}{\epsilon})\,\mathbf{E} + \mathbf{u} \times \big(r(\frac{t}{\epsilon})\,\mathbf{B}\big) \big] \cdot \Nabla_{\mathbf{u}}G \Big] \, (\psi \circ{\ucar}^{-1})^{\epsilon} \, d\mathbf{x} \, d\mathbf{u} \, dt \\
&\qquad - \int_{\R^{6}} f_{0}(\mathbf{x},\mathbf{v})\,\psi(0,0,\mathbf{x},\mathbf{v}) \, d\mathbf{x} \, d\mathbf{v} \, .
\end{split}
\end{equation}
Using now equation (\ref{eq_G}) satisfied by $G$, we finally obtain
\begin{equation} \label{term_1_Ker_final}
\begin{split}
\int_{0}^{T} &\int_{\R^{6}} (F)^{\epsilon}\, \Big[ \cfrac{\D (\psi)^{\epsilon}}{\D t} + \mathbf{v} \cdot \Nabla_{\mathbf{x}}(\psi)^{\epsilon} + \big( \mathbf{E} + \mathbf{v} \times (\mathbf{B} + \cfrac{\mathcal{M}}{\epsilon})\big) \cdot \Nabla_{\mathbf{v}}(\psi)^{\epsilon} \Big] \, d\mathbf{x}\,d\mathbf{v} \, dt \\
&= - \int_{0}^{T} \int_{\R^{6}} \Bigg(\big(r(-\frac{t}{\epsilon})\,\mathbf{u}\big) \cdot \Nabla_{\mathbf{x}_{\perp}}G + \Big[ \big(r(\frac{t}{\epsilon})\,\mathbf{E}\big)_{\perp} + \big[\mathbf{u} \times \big(r(\frac{t}{\epsilon})\,\mathbf{B}\big)\big]_{\perp}\Big] \cdot \Nabla_{\mathbf{u}_{\perp}}G \Bigg) \\
&\qquad \qquad \qquad \qquad \qquad \qquad \qquad \qquad \qquad \qquad \qquad \qquad \qquad \qquad \qquad (\psi \circ{\ucar}^{-1})^{\epsilon} \, d\mathbf{x} \, d\mathbf{u} \, dt \\
&\qquad - \int_{\R^{6}} f_{0}(\mathbf{x},\mathbf{v})\,\psi(0,0,\mathbf{x},\mathbf{v}) \, d\mathbf{x} \, d\mathbf{v} \, .
\end{split}
\end{equation}
Notice that the last term containing $f_{0}$ will vanish with the right hand side member of (\ref{Proj_Ker_TS_1}). \\

\indent Now, we study the third term of (\ref{Proj_Ker_TS_1}). For this purpose, we first notice that
\begin{equation}
\begin{split}
\cfrac{\D (l \circ{\ucar}^{-1})}{\D\tau} &= \big((\cfrac{\D l}{\D\tau}) \, \circ{\ucar}^{-1}\big) + \big( (r(-\tau)\,\mathbf{u}) \times \mathcal{M}\big) \cdot \big((\Nabla_{\mathbf{v}}l)\circ{\ucar}^{-1}\big) \\
&= - \big(r(-\tau)\,\mathbf{u}\big)\cdot \Nabla_{\mathbf{x}_{\perp}}G - \big[ r(\tau)\,\mathbf{E} + \mathbf{u} \times \big(r(\tau)\,\mathbf{B}\big)\big] \cdot \Nabla_{\mathbf{u}_{\perp}}G \, ,
\end{split}
\end{equation}
hence, integrating by parts the third term of (\ref{Proj_Ker_TS_1}) and making the change of variables defined by (\ref{DefFctTest222}), we get
\begin{equation} \label{term_3_Ker_final}
\begin{split}
\epsilon\, \int_{0}^{T} &\int_{\R^{6}} (l)^{\epsilon}\, \Big[ \cfrac{\D (\psi)^{\epsilon}}{\D t} + (\mathbf{v} \times \cfrac{\mathcal{M}}{\epsilon}) \cdot \Nabla_{\mathbf{v}}(\psi)^{\epsilon} \Big] \, d\mathbf{x}\,d\mathbf{v} \, dt \\
&= - \int_{0}^{T} \int_{\R^{6}} \Big[ \epsilon\,(\cfrac{\D l}{\D t})^{\epsilon} + (\cfrac{\D l}{\D\tau})^{\epsilon} + (\mathbf{v} \times \mathcal{M}) \cdot (\Nabla_{\mathbf{v}}l)^{\epsilon} \Big] \, (\psi)^{\epsilon} \, d\mathbf{x}\,d\mathbf{v} \, dt - \int_{\R^{6}} (l)^{\epsilon}(0,\mathbf{x},\mathbf{v})\,(\psi)^{\epsilon}(0,\mathbf{x},\mathbf{v}) \, d\mathbf{x}\,d\mathbf{v} \, , \\
&= - \int_{0}^{T} \int_{\R^{6}} \Big[ \epsilon\, \big(\cfrac{\D (l\circ{\ucar}^{-1})}{\D t}\big)^{\epsilon} + \big(\cfrac{\D (l\circ{\ucar}^{-1})}{\D \tau}\big)^{\epsilon} \Big] \, (\psi \circ{\ucar}^{-1})^{\epsilon} \, d\mathbf{x}\,d\mathbf{u} \, dt \\
&= - \int_{0}^{T} \int_{\R^{6}} \epsilon\, \big(\cfrac{\D (l\circ{\ucar}^{-1})}{\D t}\big)^{\epsilon} \, (\psi \circ{\ucar}^{-1})^{\epsilon} \, d\mathbf{x}\,d\mathbf{u} \, dt \\
&\qquad + \int_{0}^{T} \int_{\R^{6}} \Big[ \big(r(-\frac{t}{\epsilon})\,\mathbf{u}\big)\cdot \Nabla_{\mathbf{x}_{\perp}}G + \big[ r(\frac{t}{\epsilon})\,\mathbf{E} + \mathbf{u} \times \big(r(\cfrac{t}{\epsilon})\,\mathbf{B}\big)\big] \cdot \Nabla_{\mathbf{u}_{\perp}}G \Big]\, (\psi \circ{\ucar}^{-1})^{\epsilon} \, d\mathbf{x}\,d\mathbf{u} \, dt \, ,
\end{split}
\end{equation}
since $l(0,0,\mathbf{x},\mathbf{v}) = 0$ (see (\ref{def_l})). The last term of (\ref{term_3_Ker_final})
and the first term in the right hand side of (\ref{term_1_Ker_final}) will cancel each other. \\

\indent Hence (\ref{Proj_Ker_TS_1}) has to be replaced by
\begin{equation} \label{Proj_Ker_TS_2}
\begin{split}
- &\int_{0}^{T} \int_{\R^{6}} \epsilon\, \big(\cfrac{\D (l\circ{\ucar}^{-1})}{\D t}\big)^{\epsilon} \, (\psi \circ{\ucar}^{-1})^{\epsilon} \, d\mathbf{x}\,d\mathbf{u} \, dt \\
&+ \epsilon\, \int_{0}^{T} \int_{\R^{6}} (F_{1}^{\epsilon})^{\epsilon}\, \Big[ (\cfrac{\D \psi}{\D t})^{\epsilon} + \cfrac{1}{\epsilon}\,(\cfrac{\D\psi}{\D\tau})^{\epsilon} + \mathbf{v} \cdot (\Nabla_{\mathbf{x}}\psi)^{\epsilon} + \big( \mathbf{E} + \mathbf{v} \times (\mathbf{B} + \cfrac{\mathcal{M}}{\epsilon})\big) \cdot (\Nabla_{\mathbf{v}}\psi)^{\epsilon} \Big] \, d\mathbf{x}\,d\mathbf{v} \, dt \\
&+\epsilon\, \int_{0}^{T} \int_{\R^{6}} (l)^{\epsilon} \, \Big[ \mathbf{v} \cdot (\Nabla_{\mathbf{x}}\psi)^{\epsilon} + (\mathbf{E}+\mathbf{v} \times \mathbf{B}) \cdot (\Nabla_{\mathbf{v}}\psi)^{\epsilon} \Big] \, d\mathbf{x} \, d\mathbf{v}\, dt \\
&+\epsilon\, \int_{0}^{T} \int_{\R^{6}} (h^{\epsilon})^{\epsilon}\, \Big[ (\cfrac{\D \psi}{\D t})^{\epsilon} + \cfrac{1}{\epsilon}\,(\cfrac{\D\psi}{\D\tau})^{\epsilon} + \mathbf{v} \cdot (\Nabla_{\mathbf{x}}\psi)^{\epsilon} + \big( \mathbf{E} + \mathbf{v} \times (\mathbf{B} + \cfrac{\mathcal{M}}{\epsilon})\big) \cdot (\Nabla_{\mathbf{v}}\psi)^{\epsilon} \Big] \, d\mathbf{x}\,d\mathbf{v} \, dt = 0 \, .
\end{split}
\end{equation}
Now, choosing $\psi$ as in (\ref{def_psi_Ker}) yields
\begin{equation} \label{Proj_Ker_TS_3}
\begin{split}
\epsilon\,\int_{0}^{T} &\int_{\R^{6}} (G_{1}^{\epsilon} \circ{\ucar})^{\epsilon} \, \Big[ \big((\cfrac{\D \gamma}{\D t})\circ {\ucar}\big)^{\epsilon} + \mathbf{v} \cdot \big( (\Nabla_{\mathbf{x}}\gamma) \circ {\ucar}\big)^{\epsilon} + (\mathbf{E} +\mathbf{v} \times \mathbf{B}) \cdot \Nabla_{\mathbf{v}}(\gamma \circ {\ucar})^{\epsilon} \Big] \, d\mathbf{x} \, d\mathbf{v} \, dt \\
&- \int_{0}^{T} \int_{\R^{6}} \epsilon\, \big(\cfrac{\D (l\circ{\ucar}^{-1})}{\D t}\big)^{\epsilon} \, \gamma \, d\mathbf{x}\,d\mathbf{u} \, dt + \int_{0}^{T} \int_{\R^{6}} \epsilon\,(h^{\epsilon})^{\epsilon} \, \big( \cfrac{\D (\gamma\circ{\ucar})}{\D t}\big)^{\epsilon} \, d\mathbf{x}\,d\mathbf{v} \, dt \\
&+ \int_{0}^{T} \int_{\R^{6}} \big( \epsilon\, (l)^{\epsilon} + \epsilon\,(h^{\epsilon})^{\epsilon}\big)\, \Big[ \mathbf{v} \cdot \big( \Nabla_{\mathbf{x}}(\gamma \circ {\ucar})\big)^{\epsilon} + (\mathbf{E} + \mathbf{v} \times \mathbf{B}) \cdot \big( \Nabla_{\mathbf{v}}(\gamma \circ {\ucar})\big)^{\epsilon} \Big] \, d\mathbf{x}\,d\mathbf{v} \, dt = 0 \, .
\end{split}
\end{equation}
Making in the first, third and fourth terms the change of variables $\mathbf{v} \mapsto \ucar(\cfrac{t}{\epsilon},\mathbf{v})$, replacing $h^{\epsilon}$ by its expression in terms of $k^{\epsilon}$ and dividing (\ref{Proj_Ker_TS_3}) by $\epsilon$ finally gives
\begin{equation} \label{Proj_Ker_TS_final}
\begin{split}
\int_{0}^{T} &\int_{\R^{6}} G_{1}^{\epsilon} \, \Big[ \cfrac{\D\gamma}{\D t} + \big(r(-\frac{t}{\epsilon})\,\mathbf{u}\big) \cdot \Nabla_{\mathbf{x}} \gamma + \big[ r(\frac{t}{\epsilon})\,\mathbf{E} + \mathbf{u} \times \big(r(\frac{t}{\epsilon})\,\mathbf{B}\big) \big] \cdot \Nabla_{\mathbf{u}}\gamma \Big] \, d\mathbf{x}\,d\mathbf{u} \, dt \\
&- \int_{0}^{T} \int_{\R^{6}} \big(\cfrac{\D (l\circ{\ucar}^{-1})}{\D t}\big)^{\epsilon} \, \gamma \, d\mathbf{x}\,d\mathbf{u} \, dt \\
&+ \int_{0}^{T} \int_{\R^{6}} \Big[ \big( (\cfrac{\D k^{\epsilon}}{\D \tau}) \circ{\ucar}^{-1}\big)^{\epsilon} + \big(r(-\frac{t}{\epsilon})\,\mathcal{M}\big) \cdot \big( (\Nabla_{\mathbf{v}}k^{\epsilon}) \circ{\ucar}^{-1}\big)^{\epsilon} \Big] \, \cfrac{\D\gamma}{\D t} \, d\mathbf{x}\,d\mathbf{u} \, dt \\
&+ \int_{0}^{T} \int_{\R^{6}} \Big[ (l \circ{\ucar}^{-1})^{\epsilon} + \big( (\cfrac{\D k^{\epsilon}}{\D \tau}) \circ{\ucar}^{-1}\big)^{\epsilon} + \big(r(-\frac{t}{\epsilon})\,\mathcal{M}\big) \cdot \big( (\Nabla_{\mathbf{v}}k^{\epsilon}) \circ{\ucar}^{-1}\big)^{\epsilon} \Big] \\
&\qquad \qquad \qquad \times \Big[ \big( r(-\frac{t}{\epsilon})\,\mathbf{u}\big) \cdot \Nabla_{\mathbf{x}}\gamma + \big[ r(\frac{t}{\epsilon})\,\mathbf{E} + \mathbf{u} \times \big( r(\frac{t}{\epsilon})\,\mathbf{B}\big) \big] \cdot \Nabla_{\mathbf{u}}\gamma \Big] \, d\mathbf{x} \, d\mathbf{u} \, dt = 0 \, ,
\end{split}
\end{equation}
which is the Two-Scale Macro equation of the model.

\section{The Two-Scale Micro equation}
\label{SecTMiE} % \ref{SecTMiE}
\setcounter{equation}{0}
\indent In this section, we use in the weak formulation (\ref{Vlasov_decomposed_weak_TS}) oscillating test functions defined by (\ref{def_psi_Im}). The computation leading to formula (\ref{Proj_Ker_TS_2}) is valid for any oscillating function $\psi$ so we use it as a starting point for finding the Micro equation. \\

\indent Recalling that $F_{1}^{\epsilon} \in \textnormal{Ker}(\cfrac{\D}{\D\tau}+(\mathbf{v} \times \mathcal{M})\cdot \Nabla_{\mathbf{v}})$, the second term in (\ref{Proj_Ker_TS_2}) yields
\begin{equation}
\begin{split}
\epsilon\, \int_{0}^{T} &\int_{\R^{6}} (F_{1}^{\epsilon})^{\epsilon}\, \Big[ (\cfrac{\D \psi}{\D t})^{\epsilon} + \cfrac{1}{\epsilon}\,(\cfrac{\D\psi}{\D\tau})^{\epsilon} + \mathbf{v} \cdot (\Nabla_{\mathbf{x}}\psi)^{\epsilon} + \big( \mathbf{E} + \mathbf{v} \times (\mathbf{B} + \cfrac{\mathcal{M}}{\epsilon})\big) \cdot (\Nabla_{\mathbf{v}}\psi)^{\epsilon} \Big] \, d\mathbf{x}\,d\mathbf{v} \, dt \\
&= \epsilon\, \int_{0}^{T} \int_{\R^{6}} (F_{1}^{\epsilon})^{\epsilon}\, \Big[ \cfrac{\D (\psi)^{\epsilon}}{\D t} + (\mathbf{v} \times \cfrac{\mathcal{M}}{\epsilon}) \cdot \Nabla_{\mathbf{v}}(\psi)^{\epsilon} \Big] \, d\mathbf{x}\,d\mathbf{v} \, dt \\
&\qquad + \epsilon\, \int_{0}^{T} \int_{\R^{6}} (F_{1}^{\epsilon})^{\epsilon}\, \Big[ \mathbf{v} \cdot (\Nabla_{\mathbf{x}}\psi)^{\epsilon} + (\mathbf{E} + \mathbf{v} \times \mathbf{B}) \cdot (\Nabla_{\mathbf{v}}\psi)^{\epsilon}\Big] \, d\mathbf{x}\,d\mathbf{v} \, dt \\
&= - \epsilon \, \int_{0}^{T} \int_{\R^{6}} (\cfrac{\D F_{1}^{\epsilon}}{\D t})^{\epsilon} \, (\psi)^{\epsilon} \, d\mathbf{x}\,d\mathbf{v} \, dt + \epsilon\, \int_{\R^{6}} F_{1}^{\epsilon}(0,0,\mathbf{x},\mathbf{v})\, \psi(0,0,\mathbf{x},\mathbf{v})\, d\mathbf{x}\,d\mathbf{v} \\
&\qquad + \epsilon\, \int_{0}^{T} \int_{\R^{6}} (F_{1}^{\epsilon})^{\epsilon}\, \Big[ \mathbf{v} \cdot (\Nabla_{\mathbf{x}}\psi)^{\epsilon} + (\mathbf{E} + \mathbf{v} \times \mathbf{B}) \cdot (\Nabla_{\mathbf{v}}\psi)^{\epsilon}\Big] \, d\mathbf{x}\,d\mathbf{v} \, dt \\
&= - \epsilon \, \int_{0}^{T} \int_{\R^{6}} (\cfrac{\D F_{1}^{\epsilon}}{\D t})^{\epsilon} \Big[ (\cfrac{\D\kappa}{\D\tau})^{\epsilon} + (\mathbf{v} \times \mathcal{M}) \cdot (\Nabla_{\mathbf{v}}\kappa)^{\epsilon} \Big] \, d\mathbf{x} \, d\mathbf{v} \, dt \\
&\qquad +\epsilon\, \int_{\R^{6}} F_{1}^{\epsilon}(0,0,\mathbf{x},\mathbf{v})\, \Big[ \cfrac{\D\kappa}{\D\tau}(0,0,\mathbf{x},\mathbf{v}) + (\mathbf{v} \times \mathcal{M}) \cdot \Nabla_{\mathbf{v}}\kappa(0,0,\mathbf{x},\mathbf{v})\Big] \, d\mathbf{x}\,d\mathbf{v} \\
&\qquad + \epsilon\, \int_{0}^{T} \int_{\R^{6}} (F_{1}^{\epsilon})^{\epsilon} \, \Bigg[ \mathbf{v} \cdot (\cfrac{\D \Nabla_{\mathbf{x}}\kappa}{\D \tau})^{\epsilon} + (\mathbf{v} \times \mathcal{M}) \cdot \big((\Nabla_{\mathbf{x}}\Nabla_{\mathbf{v}}\kappa)^{\epsilon}\, \mathbf{v}\big) \\
&\qquad \qquad \qquad \qquad \qquad + (\mathbf{E} + \mathbf{v} \times \mathbf{B}) \cdot (\cfrac{\D \Nabla_{\mathbf{v}}\kappa}{\D \tau})^{\epsilon} + (\mathbf{E} \times \mathcal{M} + \mathbf{v} \times \mathbf{B} \times \mathcal{M}) \cdot (\Nabla_{\mathbf{v}}\kappa)^{\epsilon} \\
&\qquad \qquad \qquad \qquad \qquad + (\mathbf{E}+\mathbf{v}\times\mathcal{M}) \cdot \big( (\Nabla_{\mathbf{v}}^{\;2}\kappa)^{\epsilon}(\mathbf{v}\times\mathcal{M})\big) \Bigg] \, d\mathbf{x}\,d\mathbf{v} \, dt \, .
\end{split}
\end{equation}
The first term of (\ref{Proj_Ker_TS_2}) reads
\begin{equation}
\begin{split}
- &\int_{0}^{T} \int_{\R^{6}} \epsilon\, \big(\cfrac{\D (l\circ{\ucar}^{-1})}{\D t}\big)^{\epsilon} \, (\psi \circ{\ucar}^{-1})^{\epsilon} \, d\mathbf{x}\,d\mathbf{v} \, dt \\
&= -\epsilon \, \int_{0}^{T} \int_{\R^{6}} \big(\cfrac{\D (l\circ{\ucar}^{-1})}{\D t}\big)^{\epsilon} \Big[ (\cfrac{\D\kappa}{\D\tau} \circ{\ucar}^{-1})^{\epsilon} + \big[\big( r(-\cfrac{t}{\epsilon})\,\mathbf{v}\big) \times \mathcal{M} \big] \cdot \big((\Nabla_{\mathbf{v}}\kappa) \circ{\ucar}^{-1}\big)^{\epsilon} \Big] \, d\mathbf{x}\,d\mathbf{v} \, dt \, .
\end{split}
\end{equation}
The third term gives
\begin{equation}
\begin{split}
\epsilon\, &\int_{0}^{T} \int_{\R^{6}} (l)^{\epsilon} \, \Big[ \mathbf{v} \cdot (\Nabla_{\mathbf{x}}\psi)^{\epsilon} + (\mathbf{E}+\mathbf{v} \times \mathbf{B}) \cdot (\Nabla_{\mathbf{v}}\psi)^{\epsilon} \Big] \, d\mathbf{x} \, d\mathbf{v}\, dt \\
&= \epsilon \, \int_{0}^{T} \int_{\R^{6}} (l)^{\epsilon} \, \Big[ \mathbf{v} \cdot (\cfrac{\D\Nabla_{\mathbf{x}}\kappa}{\D\tau})^{\epsilon} + (\mathbf{v} \times \mathcal{M})\cdot \big((\Nabla_{\mathbf{x}}\Nabla_{\mathbf{v}}\kappa)^{\epsilon}\,\mathbf{v}\big) + (\mathbf{E}+\mathbf{v} \times \mathbf{B}) \cdot (\cfrac{\D\Nabla_{\mathbf{v}}\kappa}{\D\tau})^{\epsilon} \\
&\qquad \qquad + (\mathbf{E}\times\mathcal{M} + \mathbf{v} \times \mathbf{B} \times \mathcal{M}) \cdot (\Nabla_{\mathbf{v}}\kappa)^{\epsilon} + (\mathbf{E}+\mathbf{v} \times \mathbf{B}) \cdot \big((\Nabla_{\mathbf{v}}^{\;2}\kappa)^{\epsilon}\,(\mathbf{v}\times\mathcal{M})\big) \Big] \, d\mathbf{x} \, d\mathbf{v} \, dt \, .
\end{split}
\end{equation}
Concerning the last term of (\ref{Proj_Ker_TS_2}), we have
\begin{equation}
\begin{split}
\int_{0}^{T} \int_{\R^{6}} &(h^{\epsilon})^{\epsilon} \, \big( (\cfrac{\D\psi}{\D t})^{\epsilon} + \cfrac{1}{\epsilon}\, (\cfrac{\D\psi}{\D \tau})^{\epsilon} \big) \, d\mathbf{x}\,d\mathbf{v} \, dt \\
&= \int_{0}^{T} \int_{\R^{6}} (h^{\epsilon})^{\epsilon} \, \cfrac{\D (\psi)^{\epsilon}}{\D t} \, d\mathbf{x}\,d\mathbf{v} \, dt \\
&= - \int_{0}^{T} \int_{\R^{6}} \cfrac{\D (h^{\epsilon})^{\epsilon}}{\D t} \, (\psi)^{\epsilon} \, d\mathbf{x}\,d\mathbf{v} \, dt + \int_{\R^{6}} h^{\epsilon}(0,0,\mathbf{x},\mathbf{v}) \, \psi(0,0,\mathbf{x},\mathbf{v}) \, d\mathbf{x}\, d\mathbf{v} \\
&= - \int_{0}^{T} \int_{\R^{6}} \big( (\cfrac{\D h^{\epsilon}}{\D t})^{\epsilon} + \cfrac{1}{\epsilon}\, (\cfrac{\D h^{\epsilon}}{\D \tau})^{\epsilon} \big) \, \big[ (\cfrac{\D\kappa}{\D\tau})^{\epsilon} + (\mathbf{v} \times \mathcal{M}) \cdot (\Nabla_{\mathbf{v}}\kappa)^{\epsilon} \big] \, d\mathbf{x} \, d\mathbf{v} \, dt \\
&\qquad \qquad + \int_{\R^{6}} h^{\epsilon}(0,0,\mathbf{x},\mathbf{v}) \, \big[ \cfrac{\D\kappa}{\D\tau}(0,0,\mathbf{x},\mathbf{v}) + (\mathbf{v} \times \mathcal{M}) \cdot \Nabla_{\mathbf{v}}\kappa(0,0,\mathbf{x},\mathbf{v}) \big] \, d\mathbf{x} \, d\mathbf{v} \\
&= - \int_{0}^{T}\int_{\R^{6}} \Big( (\cfrac{\D^{2}k^{\epsilon}}{\D t \, \D \tau})^{\epsilon} + (\mathbf{v} \times \mathcal{M}) \cdot (\cfrac{\D \Nabla_{\mathbf{v}}\kappa}{\D t})^{\epsilon} + \cfrac{1}{\epsilon}\, (\cfrac{\D^{2}k^{\epsilon}}{\D\tau^{2}})^{\epsilon} + \cfrac{1}{\epsilon}\, (\mathbf{v} \times \mathcal{M}) \cdot (\cfrac{\D \Nabla_{\mathbf{v}}\kappa}{\D \tau})^{\epsilon} \Big)\, \\
&\qquad \qquad \qquad \qquad \qquad \qquad \qquad \qquad \qquad \qquad \times \big[ (\cfrac{\D\kappa}{\D\tau})^{\epsilon} + (\mathbf{v} \times \mathcal{M}) \cdot (\Nabla_{\mathbf{v}}\kappa)^{\epsilon} \big] \, d\mathbf{x} \, d\mathbf{v} \, dt \\
&\qquad \qquad + \int_{\R^{6}} \big[ \cfrac{\D k^{\epsilon}}{\D\tau}(0,0,\mathbf{x},\mathbf{v}) + (\mathbf{v} \times \mathcal{M}) \cdot \Nabla_{\mathbf{v}}k^{\epsilon}(0,0,\mathbf{x},\mathbf{v})\big] \, \\
&\qquad \qquad \qquad \qquad \qquad \qquad \qquad \times \big[ \cfrac{\D\kappa}{\D\tau}(0,0,\mathbf{x},\mathbf{v}) + (\mathbf{v} \times \mathcal{M}) \cdot \Nabla_{\mathbf{v}}\kappa(0,0,\mathbf{x},\mathbf{v}) \big] \, d\mathbf{x} \, d\mathbf{v} \, ,
\end{split}
\end{equation}
on the one hand, and
\begin{equation}
\begin{split}
\int_{0}^{T}&\int_{\R^{6}} (h^{\epsilon})^{\epsilon}\,\Big[ \mathbf{v} \cdot (\Nabla_{\mathbf{x}}\psi)^{\epsilon} + \big( \mathbf{E} + \mathbf{v} \times (\mathbf{B} + \cfrac{\mathcal{M}}{\epsilon})\big) \cdot (\Nabla_{\mathbf{v}}\psi)^{\epsilon} \Big] \, d\mathbf{x}\,d\mathbf{v} \, dt \\
&= \int_{0}^{T} \int_{\R^{6}} \big( (\cfrac{\D k^{\epsilon}}{\D\tau})^{\epsilon} + (\mathbf{v} \times \mathcal{M}) \cdot (\Nabla_{\mathbf{v}}k^{\epsilon})^{\epsilon} \big) \, \Big[ \mathbf{v} \cdot (\cfrac{\D\Nabla_{\mathbf{x}}\kappa}{\D\tau})^{\epsilon} + (\mathbf{v} \times \mathcal{M}) \cdot \big( (\Nabla_{\mathbf{x}}\Nabla_{\mathbf{v}}\kappa)^{\epsilon}\,\mathbf{v}\big) \\
&\qquad \qquad \qquad + (\mathbf{E}+\mathbf{v}\times\mathbf{B}) \cdot (\cfrac{\D\Nabla_{\mathbf{v}}\kappa}{\D\tau})^{\epsilon} + (\mathbf{E} \times \mathcal{M} + (\mathbf{v} \times \mathbf{B}) \times \mathcal{M}) \cdot (\Nabla_{\mathbf{v}}\kappa)^{\epsilon} \\
&\qquad \qquad \qquad + (\mathbf{E}+\mathbf{v}\times\mathbf{B}) \cdot \big((\Nabla_{\mathbf{v}}^{\;2}\kappa)^{\epsilon}\,(\mathbf{v} \times \mathcal{M})\big) + \cfrac{1}{\epsilon}\,(\mathbf{v} \times \mathcal{M}) \cdot (\cfrac{\D\Nabla_{\mathbf{x}}\kappa}{\D\tau})^{\epsilon} \\
&\qquad \qquad \qquad + \big((\mathbf{v}\times\mathcal{M})\times\mathcal{M}\big) \cdot (\Nabla_{\mathbf{v}}\kappa)^{\epsilon} + (\mathbf{v} \times \mathcal{M}) \cdot \big( (\Nabla_{\mathbf{v}}^{\;2}\kappa)^{\epsilon}(\mathbf{v} \times \mathcal{M})\big) \Big] \, d\mathbf{x}\,d\mathbf{v}\,dt \, ,
\end{split}
\end{equation}
on the other hand. Remarking that
\begin{equation}
\big((\mathbf{v}\times\mathcal{M})\times\mathcal{M}\big) \cdot (\Nabla_{\mathbf{v}}\kappa)^{\epsilon} = -(\mathbf{v} \times \mathcal{M}) \cdot \big( (\Nabla_{\mathbf{v}}\kappa)^{\epsilon} \times \mathcal{M}\big) = -\mathbf{v} \cdot (\Nabla_{\mathbf{v}}\kappa)^{\epsilon} \, ,
\end{equation}
and dividing by $\epsilon$, we finally get the weak formulation of the Two-Scale Micro equation:
\begin{equation}\label{def_k_eps}
\begin{split}
&- \int_{0}^{T}\int_{\R^{6}} \Big( (\cfrac{\D^{2}k^{\epsilon}}{\D t \, \D \tau})^{\epsilon} + (\mathbf{v} \times \mathcal{M}) \cdot (\cfrac{\D \Nabla_{\mathbf{v}}k^{\epsilon}}{\D t})^{\epsilon} + \cfrac{1}{\epsilon}\, (\cfrac{\D^{2}k^{\epsilon}}{\D\tau^{2}})^{\epsilon} + \cfrac{1}{\epsilon}\, (\mathbf{v} \times \mathcal{M}) \cdot (\cfrac{\D \Nabla_{\mathbf{v}}k^{\epsilon}}{\D \tau})^{\epsilon} \Big)\, \\
&\qquad \qquad \qquad \qquad \qquad \qquad \qquad \qquad \qquad \qquad \qquad \big[ (\cfrac{\D\kappa}{\D\tau})^{\epsilon} + (\mathbf{v} \times \mathcal{M}) \cdot (\Nabla_{\mathbf{v}}\kappa)^{\epsilon} \big] \, d\mathbf{x} \, d\mathbf{v} \, dt \\
&\qquad \qquad + \int_{\R^{6}} \big[ \cfrac{\D k^{\epsilon}}{\D\tau}(0,0,\mathbf{x},\mathbf{v}) + (\mathbf{v} \times \mathcal{M}) \cdot \Nabla_{\mathbf{v}}k^{\epsilon}(0,0,\mathbf{x},\mathbf{v})\big] \, \\
&\qquad \qquad \qquad \qquad \qquad \qquad \qquad \qquad \big[ \cfrac{\D\kappa}{\D\tau}(0,0,\mathbf{x},\mathbf{v}) + (\mathbf{v} \times \mathcal{M}) \cdot \Nabla_{\mathbf{v}}\kappa(0,0,\mathbf{x},\mathbf{v}) \big] \, d\mathbf{x} \, d\mathbf{v} \\
&\qquad \qquad + \int_{0}^{T} \int_{\R^{6}} \big( (\cfrac{\D k^{\epsilon}}{\D\tau})^{\epsilon} + (\mathbf{v} \times \mathcal{M}) \cdot (\Nabla_{\mathbf{v}}k^{\epsilon})^{\epsilon} \big) \, \Big[ \mathbf{v} \cdot (\cfrac{\D\Nabla_{\mathbf{x}}\kappa}{\D\tau})^{\epsilon} + (\mathbf{v} \times \mathcal{M}) \cdot \big( (\Nabla_{\mathbf{x}}\Nabla_{\mathbf{v}}\kappa)^{\epsilon}\,\mathbf{v}\big) \\
&\qquad \qquad \qquad \qquad \qquad \qquad + (\mathbf{E}+\mathbf{v}\times\mathbf{B}) \cdot (\cfrac{\D\Nabla_{\mathbf{v}}\kappa}{\D\tau})^{\epsilon} + (\mathbf{E} \times \mathcal{M} + (\mathbf{v} \times \mathbf{B}) \times \mathcal{M}) \cdot (\Nabla_{\mathbf{v}}\kappa)^{\epsilon} \\
&\qquad \qquad \qquad \qquad \qquad \qquad + (\mathbf{E}+\mathbf{v}\times\mathbf{B}) \cdot \big((\Nabla_{\mathbf{v}}^{\;2}\kappa)^{\epsilon}\,(\mathbf{v} \times \mathcal{M})\big) + \cfrac{1}{\epsilon}\,(\mathbf{v} \times \mathcal{M}) \cdot (\cfrac{\D\Nabla_{\mathbf{x}}\kappa}{\D\tau})^{\epsilon} \\
&\qquad \qquad \qquad \qquad \qquad \qquad -\mathbf{v} \cdot (\Nabla_{\mathbf{v}}\kappa)^{\epsilon} + (\mathbf{v} \times \mathcal{M}) \cdot \big( (\Nabla_{\mathbf{v}}^{\;2}\kappa)^{\epsilon}(\mathbf{v} \times \mathcal{M})\big) \Big] \, d\mathbf{x}\,d\mathbf{v}\,dt \\
& \qquad \qquad - \int_{0}^{T} \int_{\R^{6}} \big(\cfrac{\D (l\circ{\ucar}^{-1})}{\D t}\big)^{\epsilon} \Big[ (\cfrac{\D\kappa}{\D\tau} \circ{\ucar}^{-1})^{\epsilon} + \big[\big( r(-\cfrac{t}{\epsilon})\,\mathbf{v}\big) \times \mathcal{M} \big] \cdot \big((\Nabla_{\mathbf{v}}\kappa) \circ{\ucar}^{-1}\big)^{\epsilon} \Big] \, d\mathbf{x}\,d\mathbf{v} \, dt \\
&\qquad \qquad + \int_{0}^{T} \int_{\R^{6}} (l)^{\epsilon} \, \Big[ \mathbf{v} \cdot (\cfrac{\D\Nabla_{\mathbf{x}}\kappa}{\D\tau})^{\epsilon} + (\mathbf{v} \times \mathcal{M})\cdot \big((\Nabla_{\mathbf{x}}\Nabla_{\mathbf{v}}\kappa)^{\epsilon}\,\mathbf{v}\big) + (\mathbf{E}+\mathbf{v} \times \mathbf{B}) \cdot (\cfrac{\D\Nabla_{\mathbf{v}}\kappa}{\D\tau})^{\epsilon} \\
&\qquad \qquad \qquad + (\mathbf{E}\times\mathcal{M} + (\mathbf{v} \times \mathbf{B}) \times \mathcal{M}) \cdot (\Nabla_{\mathbf{v}}\kappa)^{\epsilon} + (\mathbf{E}+\mathbf{v} \times \mathbf{B}) \cdot \big((\Nabla_{\mathbf{v}}^{\;2}\kappa)^{\epsilon}\,(\mathbf{v}\times\mathcal{M})\big) \Big] \, d\mathbf{x} \, d\mathbf{v} \, dt \\
&\qquad \qquad - \int_{0}^{T} \int_{\R^{6}} (\cfrac{\D F_{1}^{\epsilon}}{\D t})^{\epsilon} \Big[ (\cfrac{\D\kappa}{\D\tau})^{\epsilon} + (\mathbf{v} \times \mathcal{M}) \cdot (\Nabla_{\mathbf{v}}\kappa)^{\epsilon} \Big] \, d\mathbf{x} \, d\mathbf{v} \, dt \\
&\qquad \qquad + \int_{\R^{6}} F_{1}^{\epsilon}(0,0,\mathbf{x},\mathbf{v})\, \Big[ \cfrac{\D\kappa}{\D\tau}(0,0,\mathbf{x},\mathbf{v}) + (\mathbf{v} \times \mathcal{M}) \cdot \Nabla_{\mathbf{v}}\kappa(0,0,\mathbf{x},\mathbf{v})\Big] \, d\mathbf{x}\,d\mathbf{v} \\
&\qquad \qquad + \int_{0}^{T} \int_{\R^{6}} (F_{1}^{\epsilon})^{\epsilon} \, \Bigg[ \mathbf{v} \cdot (\cfrac{\D \Nabla_{\mathbf{x}}\kappa}{\D \tau})^{\epsilon} + (\mathbf{v} \times \mathcal{M}) \cdot \big((\Nabla_{\mathbf{x}}\Nabla_{\mathbf{v}}\kappa)^{\epsilon}\, \mathbf{v}\big) \\
&\qquad \qquad \qquad \qquad \qquad \qquad \qquad + (\mathbf{E} + \mathbf{v} \times \mathbf{B}) \, (\cfrac{\D \Nabla_{\mathbf{v}}\kappa}{\D \tau})^{\epsilon} + (\mathbf{E} \times \mathcal{M} + (\mathbf{v} \times \mathbf{B}) \times \mathcal{M}) \cdot (\Nabla_{\mathbf{v}}\kappa)^{\epsilon} \\
&\qquad \qquad \qquad \qquad \qquad \qquad \qquad + (\mathbf{E}+\mathbf{v}\times\mathcal{M}) \cdot \big((\Nabla_{\mathbf{v}}^{\;2}\kappa)^{\epsilon}(\mathbf{v}\times\mathcal{M})\big) \Bigg] \, d\mathbf{x}\,d\mathbf{v} \, dt = 0 \, .
\end{split}
\end{equation}
\begin{remark}\label{rem}
First, system \eqref{Proj_Ker_TS_final}-\eqref{def_k_eps} clearly has solutions,
since functions $G_{1}^{\epsilon}$ and $k^{\epsilon}$ as introduced in \eqref{decompo_feps_TS}
by using \eqref{decompo_orthogonal} were shown to satisfy
\eqref{Proj_Ker_TS_final}-\eqref{def_k_eps}. Then, we are aware that the Two-Scale
Macro-Micro system, \eqref{Proj_Ker_TS_final}-\eqref{def_k_eps}, has not necessarily a
unique solution.
The reason for the uniqueness lack is the replacement of $\tau$ by $\cfrac{t}{\epsilon}$.
In future work we will look for an additional condition for $G^{\epsilon}_1$ and
$k^{\epsilon}$ leading to the uniqueness of the whole solution
$(G^{\epsilon}_1,k^{\epsilon})$. Such a condition will be useful for the conception
of the numerical scheme for the Two-Scale Macro-Micro model in order to be sure that
when $\epsilon\sim 1$ we will approximate the solution of the starting Vlasov
equation \eqref{Vlasov_rescaled_intro}.
\end{remark}
\begin{remark}
Problem \eqref{eq_G}-\eqref{def_l}-\eqref{Proj_Ker_TS_final}-\eqref{def_k_eps} plays
the role of problem $\mathcal{E}^{\epsilon} \, U^{\epsilon} = 0$ in
diagram \eqref{diagram_TSAPS}.
\end{remark}

\section{Study of the asymptotic behaviour}
\label{SecRes} 
\setcounter{equation}{0}

\subsection{The Two-Scale Macro-Micro problem}
The Two-Scale Macro-Micro problem, \textit{i.e.} the system of equations
\eqref{eq_G}-\eqref{def_l}-\eqref{Proj_Ker_TS_final}-\eqref{def_k_eps}, is more
complicated to solve than the original problem \eqref{Vlasov_rescaled_intro},
but has two advantages over the latter. First, the Two-Scale
Macro-Micro model trivially reduces to the limit model \eqref{eq_G} by taking $\epsilon \to 0$ in \eqref{eq_G}-\eqref{def_l}-\eqref{Proj_Ker_TS_final}-\eqref{def_k_eps}. Second, the Two-Scale
Macro-Micro model decomposes the original solution $f^{\epsilon}$ into a Macro part,
$G\circ\ucar + \epsilon\,G_{1}^{\epsilon}\circ\ucar$, and a Micro part,
$\epsilon\,l + \epsilon\,h_{\epsilon}$. This question is relevant when doing the
transition from the very small $\epsilon$ regime to the one of $\epsilon\sim 1$,
since our model describes separately the evolution at the macroscopic time scale
(of $G\circ\ucar$ and $G_{1}^{\epsilon}\circ\ucar$) which contains essential oscillation through
$\ucar$, and the evolution of oscillation corrections ($l$ and $h_{\epsilon}$) at
the microscopic time scale.

Now it is easy to see that, under the hypothesis of uniqueness of the solution of
equations \eqref{Proj_Ker_TS_final} and \eqref{def_k_eps} (see Remark~\ref{rem}),
the Two-Scale Macro-Micro problem
\eqref{eq_G}-\eqref{def_l}-\eqref{Proj_Ker_TS_final}-\eqref{def_k_eps} is equivalent
to the original problem \eqref{Vlasov_rescaled_intro}. Indeed, let $f^{\epsilon}$ be the
solution of \eqref{Vlasov_rescaled_intro}. We have seen, by using Lemma~\ref{lemma:2},
that the decomposition \eqref{decompo_feps_TS} exists and is unique. Then, by the
calculations we did in the previous sections, we obtain that $G$ is solution to
\eqref{eq_G}, $l$ is given by the formula \eqref{def_l}, $G_{1}^{\epsilon}$ is
solution to \eqref{Proj_Ker_TS_final} and $k^{\epsilon}$ to \eqref{def_k_eps}.
In particular, we have proved that the equation system
\eqref{Proj_Ker_TS_final}-\eqref{def_k_eps} has solution.

Conversely, assume that the solution of the system
\eqref{Proj_Ker_TS_final}-\eqref{def_k_eps} is unique. Let
$(G,l,G_{1}^{\epsilon},k^{\epsilon})$ be the solution of
\eqref{eq_G}-\eqref{def_l}-\eqref{Proj_Ker_TS_final}-\eqref{def_k_eps}. Then constructing
$f^{\epsilon}$ by \eqref{moregeneral_decompo_feps_TS}, we will obtain the solution
of \eqref{Vlasov_rescaled_intro}, since this problem has unique solution.

\subsection{Macro-Micro decomposition \emph{vs} Two-Scale Macro-Micro decomposition}
\label{SecTSMMvsMM}

As it has been mentioned in the previous sections, the Two-Scale Macro-Micro decomposition model \eqref{eq_G}-\eqref{def_l}-\eqref{Proj_Ker_TS_final}-\eqref{def_k_eps} is linked to the usual Macro-Micro model \eqref{eq_m}, \eqref{proj_Ker_weak}, \eqref{proj_Im_weak}, \eqref{proj_Ker_weak_init},
\eqref{proj_Im_weak_init}.

\begin{property}
Integrating in $\tau$ the Two-Scale Macro-Micro decomposition leads to the classical
Macro-Micro model developed in Section~\ref{Sec:Mmdeco}. 
\end{property}
Indeed, let us recall
that the unique decomposition made in \eqref{decompo_feps_TS} is of the kind
\begin{equation}\label{eco:1}
f^{\epsilon}(t,\mathbf{x},\mathbf{v}) = F(t,\tau,\mathbf{x},\mathbf{v})\,+\,
F_1^{\epsilon}(t,\tau,\mathbf{x},\mathbf{v})+H_1^{\epsilon}(t,\tau,\mathbf{x},\mathbf{v}),
\end{equation}
for some $F_1^{\epsilon}\in\textnormal{Ker}\Big(\cfrac{\D}{\D\tau} + (\mathbf{v}\times
\mathcal{M})\cdot\Nabla_{\mathbf{v}}\Big)$ and $H_1^{\epsilon}\in\textnormal{Im}\Big(
\cfrac{\D}{\D\tau} + (\mathbf{v} \times \mathcal{M}) \cdot \Nabla_{\mathbf{v}}\Big)$.
Then, integrating \eqref{eco:1} in $\tau$, we obtain using \eqref{link_fF}
\begin{equation}\label{eco:2}
f^{\epsilon}(t,\mathbf{x},\mathbf{v})=f(t,x,\mathbf{v})\,+\int_0^{1}
F_1^{\epsilon}(t,\tau,\mathbf{x},\mathbf{v})\,d\tau \,+\int_0^{1}
H_1^{\epsilon}(t,\tau,\mathbf{x},\mathbf{v})\,d\tau.
\end{equation}
Now, changing the variable $\mathbf{v}$ into $(v_{||},v_{\perp},\alpha)$ defined in
\eqref{cyl:coord}, and using Lemma~\ref{lemma:3}, we obtain that the function
$(v_{\perp},\alpha) \mapsto \displaystyle{\int_0^{1}F_1^{\epsilon}\big(t,\tau,\mathbf{x},
v_{||}\,\mathbf{e}_{1}+v_{\perp}(\cos\alpha\,\mathbf{e}_{2}+\sin\alpha\,
\mathbf{e}_{3})\big)\,d\tau}$ is in $\textnormal{Ker}\Big(\cfrac{\D}{\D\alpha}\Big)$.
Similarly, \hspace{1mm}  $(v_{\perp},\alpha) \mapsto \displaystyle{\int_0^{1}
H_1^{\epsilon}\big(t,\tau,\mathbf{x},v_{||}\,\mathbf{e}_{1}+v_{\perp}(\cos\alpha\,
\mathbf{e}_{2}+\sin\alpha\,\mathbf{e}_{3})\big)\,d\tau}$ is in $\textnormal{Im}\Big(
\cfrac{\D}{\D\alpha}\Big)$. Next, writing \eqref{eco:2} in the new variables and recalling
the decomposition of $f^{\epsilon}(t,\mathbf{x},\mathbf{v})$ in \eqref{decompo_feps_weak},
we deduce from \eqref{link:mf} and the uniqueness of such a decomposition (stated in
Lemma~\ref{lemma:1}(iii)) that
\begin{equation}
\begin{split}
m_1^{\epsilon}(t,\mathbf{x},v_{||},v_{\perp}) & = \int_0^{1}F_1^{\epsilon}\big(t,\tau,\mathbf{x},
v_{||}\,\mathbf{e}_{1}+v_{\perp}(\cos\alpha\,\mathbf{e}_{2}+\sin\alpha\,
\mathbf{e}_{3})\big)\,d\tau, \\
n^{\epsilon}(t,\mathbf{x},v_{||},v_{\perp},\alpha) & = \int_0^{1}
H_1^{\epsilon}\big(t,\tau,\mathbf{x},v_{||}\,\mathbf{e}_{1}+v_{\perp}(\cos\alpha\,
\mathbf{e}_{2}+\sin\alpha\,\mathbf{e}_{3})\big)\,d\tau.
\end{split}
\end{equation}

\subsection{Convergence of the Two-Scale Macro-Micro problem}
\indent Let us recall that our Two-Scale Macro-Micro decomposition \eqref{decompo_feps_TS}
is based on the convergence result in \eqref{1stord_conv}. More precisely, keeping in mind
that $G+\epsilon\,\widetilde{G}+\epsilon \,l$ is the first order approximation of
$f^{\epsilon}$, \eqref{decompo_feps_TS} can be seen as a Macro-Micro decomposition of
$f^{\epsilon}$ at the first order level of approximation. Next, we justify this approximation
using the Two-Scale convergence.

\begin{theorem}\label{conv}
We assume that $f_{0} \in L^{2}(\R^{6})$, $\mathbf{E}\in W^{1,\infty}(\R^{3})$, 
$\mathbf{B}\in W^{1,\infty}(\R^{3})$, and $\D l/\D t,\,\nabla_{\mathbf{x},\mathbf{v}} l\in
L^{\infty}\big(0,T;L^{\infty}(0,1;L^2(\R^{6}))\big)$. Then, when $\epsilon\to0$, the solutions
$G^{\epsilon}_1\circ\ucar$ of \eqref{Proj_Ker_TS_final} Two-Scale converge to
$\widetilde{G}\in L^{\infty}\big(0,T;L^2(\R^{6})\big)$, the solution of \eqref{G_tilde}.
When $\epsilon\to0$, the solutions $k^{\epsilon}$ of \eqref{def_k_eps} Two-Scale converge to $0$.
\end{theorem}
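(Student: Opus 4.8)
The plan is to combine the first--order Two-Scale convergence \eqref{1stord_conv} with the exact substituted decomposition \eqref{moregeneral_decompo_feps_TS}, and then to separate the kernel and the range parts of the limit. Writing $F^{\epsilon}(t,\mathbf{x},\mathbf{v}):=F(t,t/\epsilon,\mathbf{x},\mathbf{v})=(G\circ\ucar)^{\epsilon}$ and subtracting it in \eqref{moregeneral_decompo_feps_TS}, one gets
\[
\frac{1}{\epsilon}\bigl(f^{\epsilon}-F^{\epsilon}\bigr)=(G_{1}^{\epsilon}\circ\ucar)^{\epsilon}+(l)^{\epsilon}+(h^{\epsilon})^{\epsilon},
\]
whose left--hand side Two-Scale converges to $\widetilde{F}=\widetilde{G}\circ\ucar+l$ by \eqref{1stord_conv}. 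Since the hypotheses on $l$ make $(l)^{\epsilon}$ Two-Scale converge to $l$ itself, it follows that $(G_{1}^{\epsilon}\circ\ucar)^{\epsilon}+(h^{\epsilon})^{\epsilon}$ Two-Scale converges to $\widetilde{G}\circ\ucar$. The whole theorem is then a matter of splitting this sum along \eqref{decompo_orthogonal}.

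First I would establish a priori bounds, uniform in $\epsilon$, for $G_{1}^{\epsilon}$ in $L^{\infty}(0,T;L^{2}(\R^{6}))$ and for $k^{\epsilon}$ (equivalently for $h^{\epsilon}$, via \eqref{link_kepsheps}) in the space of \eqref{decompo_feps_TS}. I expect this to follow either from \eqref{decompo_orthogonal} applied, \emph{before} the substitution $\tau=t/\epsilon$, to $\epsilon^{-1}(f^{\epsilon}-F)$, together with the closed range of the operator \eqref{def_op} (Appendix B), or from energy estimates for the coupled weak system \eqref{Proj_Ker_TS_final}--\eqref{def_k_eps}, or from the second--order asymptotic expansion of \cite{Raviart}. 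Granting such bounds, up to a subsequence $(G_{1}^{\epsilon}\circ\ucar)^{\epsilon}$ Two-Scale converges to some $\Phi$ and $(h^{\epsilon})^{\epsilon}$ to some $\Psi$, with $\Phi+l+\Psi=\widetilde{F}$.

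Next I would identify $\Phi$ and $\Psi$ through the uniqueness of the decomposition \eqref{decompo_orthogonal}. For every $\epsilon$, $G_{1}^{\epsilon}\circ\ucar$ lies in the kernel of \eqref{def_op} and $h^{\epsilon}$ in its (closed) range; testing \eqref{Vlasov_decomposed_weak_TS} against profiles of the form \eqref{def_psi_Ker} and \eqref{def_psi_Im} and using the change of variables \eqref{DefFctTest222}, as in Sections~\ref{SecTMaE} and \ref{SecTMiE}, I would check that these two properties pass to the Two-Scale limit, i.e. $\Phi$ is in the kernel and $\Psi$ in the range of \eqref{def_op}. Since $l$ also belongs to the range, $\Phi+(l+\Psi)$ is \emph{a} decomposition of $\widetilde{F}$ along \eqref{decompo_orthogonal}; but $\widetilde{G}\circ\ucar+l$ is another one, because $\widetilde{G}\circ\ucar$ is in the kernel and $l$ in the range (both shown in Section~\ref{SecTSMMDP}). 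Uniqueness then forces $\Phi=\widetilde{G}\circ\ucar$ and $\Psi=0$. Combining $(h^{\epsilon})^{\epsilon}\to0$ with \eqref{link_kepsheps}, the boundedness of the pseudo-inverse of \eqref{def_op} on its closed range, and the normalization of $k^{\epsilon}$ built into the construction \eqref{decompo_feps_TS}, I conclude $k^{\epsilon}\to0$ in the Two-Scale sense. That $\widetilde{G}$ solves \eqref{G_tilde} is immediate from the characterization of $\widetilde{F}$ recalled from \cite{Raviart}; and since \eqref{G_tilde} is a well-posed linear transport problem and the limit of $k^{\epsilon}$ is the trivial one, every subsequence carries the same limit, so the full families converge.

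The hard part will be the a priori bounds of the second step. Once $\tau$ has been replaced by $t/\epsilon$, the functions $G_{1}^{\epsilon}\circ\ucar$ and $h^{\epsilon}$ are no longer orthogonal in $L^{2}(\R^{6})$ at fixed time, so the bound on their sum coming from \eqref{1stord_conv} does not split by itself. The uniform control of $G_{1}^{\epsilon}$ and of $k^{\epsilon}$ separately must instead be recovered from the $\epsilon$--independent structure of the problem --- either via the un-substituted orthogonal decomposition \eqref{decompo_orthogonal} and the closed-range estimate of Appendix B, or via energy estimates on the coupled Macro--Micro system \eqref{Proj_Ker_TS_final}--\eqref{def_k_eps}, in which the stiff $1/\epsilon$ terms are controlled using the anti-symmetry of the operator \eqref{def_op}. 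Everything else --- passing to the Two-Scale limit term by term and invoking uniqueness --- is routine.
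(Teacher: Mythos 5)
Your proposal is correct and follows essentially the same route as the paper: both rest on Theorem~1.5 of \cite{Raviart} (i.e.\ \eqref{1stord_conv}), the identity $\frac{1}{\epsilon}\big(f^{\epsilon}-G\circ\ucar\big)-l = G_{1}^{\epsilon}\circ\ucar + h^{\epsilon}$ coming from \eqref{moregeneral_decompo_feps_TS}, and the splitting of the limit along $\textnormal{Ker}\oplus\textnormal{Im}$ of the operator \eqref{def_op}. The paper's own proof is a single short paragraph that declares the conclusion ``clearly true'' once the kernel/range memberships are noted; the uniform bounds, subsequence extraction and stability of the kernel/range under Two-Scale limits that you rightly flag as the delicate part are left implicit there.
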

\begin{proof}
Recall from Theorem 1.5 of \cite{Raviart} that when $\epsilon\to0$, $\Big(\frac{1}{\epsilon}
\big(f^{\epsilon}-G\circ\ucar\big) - l\Big)_{\epsilon>0}$ Two-Scale converges to
$\widetilde{G}\circ\ucar\in\textnormal{Ker}\Big(\cfrac{\D}{\D\tau} + (\mathbf{v} \times \mathcal{M})
\cdot \Nabla_{\mathbf{v}} \Big)$.
Then, since \eqref{moregeneral_decompo_feps_TS} implies that $$\frac{1}{\epsilon}
\big(f^{\epsilon}-G\circ\ucar\big)-l = G_1^{\epsilon}\circ\ucar + h^{\epsilon},$$ and since
$G_1^{\epsilon}\circ\ucar\in\textnormal{Ker}\Big(\cfrac{\D}{\D\tau} + (\mathbf{v} \times
\mathcal{M}) \cdot \Nabla_{\mathbf{v}} \Big)$ and
$h^{\epsilon}\in\textnormal{Im}\Big(\cfrac{\D}{\D\tau} + (\mathbf{v} \times \mathcal{M})
\cdot \Nabla_{\mathbf{v}} \Big)$, the theorem's conclusion is clearly true.
\end{proof}

\begin{remark}
Thanks to the previous results, the Two-Scale Macro-Micro model we have built satisfies the expected asymptotic behaviour symbolized by the arrows in the top layer of diagram \eqref{diagram_TSAPS}.
Concerning the time discretization of
\eqref{eq_G}-\eqref{def_l}-\eqref{Proj_Ker_TS_final}-\eqref{def_k_eps}, we intend
to proceed as discussed in Remark~\ref{rem:apCMM} for the classical Macro-Micro
formulation (see \cite{Bennoune, Crouseilles, Lemou-Mieussens}), in order to 
obtain Asymptotic-Preserving schemes, symbolized by the arrows in the bottom
layer of diagram \eqref{diagram_TSAPS}.

%Therefore, we expect that any reasonable numerical scheme for equations
%\eqref{eq_G}-\eqref{def_l}-\eqref{Proj_Ker_TS_final}-\eqref{def_k_eps} will
%inherent the Asymptotic-Preserving properties, symbolized by the arrows in 
%the bottom layer of diagram \eqref{diagram_TSAPS}.
\end{remark}

\begin{appendices}

In Appendix A, we characterize the projection onto $\textnormal{Ker}
(\frac{\D}{\D\alpha})$. We establish that it consists in computing its average in
$\alpha$ and projecting it onto $\textnormal{Im}(\frac{\D}{\D\alpha})$ consists in
substracting from it its average value. Then, in the second Appendix, we do the same
with operator $\frac{\D}{\D\tau} + (\mathbf{v} \times \mathcal{M}) \cdot
\Nabla_{\mathbf{v}}$. We also establish the link that exists between these two operators.

\section{About the operator $\cfrac{\D}{\D\alpha}$}
\setcounter{equation}{0}

\begin{lemma}\label{lemma:1}
The unbounded operator $\cfrac{\D}{\D\alpha} : L_{\#_{2\pi}}^{2}(\R; L^{2}(\R^{+};
v_{\perp}\,dv_{\perp})) \to L_{\#_{2\pi}}^{2}(\R; L^{2}(\R^{+}; v_{\perp}\,dv_{\perp}))$
has a closed range in $L_{\#_{2\pi}}^{2}(\R; L^{2}(\R^{+}; v_{\perp}\,dv_{\perp}))$ and
\begin{itemize}
\item[(i)] $\textnormal{Ker}\Big(\cfrac{\D}{\D\alpha}\Big)^{\perp} = \textnormal{Im}
\Big(\cfrac{\D}{\D\alpha}\Big)$, \\ 
\item[(ii)]
For any function $f \in L_{\#_{2\pi}}^{2}(\R; L^{2}(\R^{+}; v_{\perp}\,dv_{\perp}))$, the
projection of $f$ on $\textnormal{Ker}\Big(\cfrac{\D}{\D\alpha}\Big)$ is the function
\begin{equation}\label{def:proj}
Pf\;:\;(v_{\perp},\alpha)\,\longmapsto\,\cfrac{1}{2\pi}\,\int_{0}^{2\pi} f(v_{\perp},
\theta)\,d\theta\,,
\end{equation}
\item[(iii)] $L_{\#_{2\pi}}^{2}(\R; L^{2}(\R^{+}; v_{\perp}\,dv_{\perp})) = \textnormal{Ker}
\Big(\cfrac{\D}{\D\alpha}\Big) \oplus \textnormal{Im}\Big(\cfrac{\D}{\D\alpha}\Big)$.
\end{itemize}
\end{lemma}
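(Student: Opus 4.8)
The plan is to diagonalize $\partial_\alpha$ by Fourier series in the angular variable. Write $H = L^{2}_{\#_{2\pi}}(\R; L^{2}(\R^{+}; v_{\perp}\,dv_{\perp}))$ and expand $f \in H$ as $f(v_{\perp},\alpha) = \sum_{n \in \Z} c_{n}(v_{\perp})\,e^{in\alpha}$, where the Fourier coefficients $c_{n}(v_{\perp}) = \frac{1}{2\pi}\int_{0}^{2\pi} f(v_{\perp},\theta)\,e^{-in\theta}\,d\theta$ are well defined in the coefficient space $L^{2}(\R^{+}; v_{\perp}\,dv_{\perp})$ and Parseval gives $\|f\|_{H}^{2} = 2\pi \sum_{n}\|c_{n}\|^{2}$. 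On this decomposition $\partial_\alpha$ acts as $f \mapsto \sum_{n} in\,c_{n}(v_{\perp})\,e^{in\alpha}$, with domain $\{f : \sum_{n} n^{2}\|c_{n}\|^{2} < \infty\}$, which contains all trigonometric polynomials in $\alpha$ with coefficients in the coefficient space, hence is dense. Anti-symmetry is immediate: integrating by parts in $\alpha$ and using $2\pi$-periodicity kills the boundary term, so $\langle \partial_\alpha f, g\rangle = -\langle f, \partial_\alpha g\rangle$ on the domain.

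First I would read off $\textnormal{Ker}\big(\partial_\alpha\big) = \{f : c_{n} = 0 \text{ for all } n \neq 0\}$, i.e. the functions independent of $\alpha$. Item (ii) then follows at once: the orthogonal projection of $H$ onto this closed subspace is precisely extraction of the zeroth Fourier mode, which is the map $Pf : (v_{\perp},\alpha) \mapsto \frac{1}{2\pi}\int_{0}^{2\pi} f(v_{\perp},\theta)\,d\theta$ of \eqref{def:proj}. For (iii), Parseval already exhibits $H$ as the orthogonal sum of the $n=0$ part and the $n \neq 0$ part, so each $f$ splits as $f = Pf + (f - Pf)$ with $Pf \in \textnormal{Ker}(\partial_\alpha)$ orthogonal to $f - Pf = \sum_{n \neq 0} c_{n}e^{in\alpha}$; it remains only to check that this second summand lies in $\textnormal{Im}(\partial_\alpha)$.

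The crux is the identification $\textnormal{Im}\big(\partial_\alpha\big) = \{g \in H : Pg = 0\}$ together with closedness of this set. The inclusion $\subseteq$ is trivial, since the zeroth Fourier mode of $\partial_\alpha f$ is $i\cdot 0 \cdot c_{0} = 0$, so $P\partial_\alpha f = 0$. For $\supseteq$, given $g = \sum_{n} d_{n}(v_{\perp})e^{in\alpha}$ with $d_{0} = 0$, I would set $f := \sum_{n \neq 0} \frac{d_{n}(v_{\perp})}{in}\,e^{in\alpha}$; since $\|d_{n}/(in)\| \le \|d_{n}\|$ for $n \neq 0$, Parseval gives $f \in H$, while $\sum_{n} n^{2}\|d_{n}/(in)\|^{2} = \sum_{n \neq 0}\|d_{n}\|^{2} = \|g\|_{H}^{2}/2\pi < \infty$ shows $f$ belongs to the domain and $\partial_\alpha f = g$. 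Thus $\textnormal{Im}(\partial_\alpha) = \ker P$, which is the kernel of a bounded operator, hence closed. Consequently $\textnormal{Ker}(\partial_\alpha)$ and $\textnormal{Im}(\partial_\alpha)$ are orthogonal with sum $H$, which is (iii); taking orthogonal complements then gives $\textnormal{Ker}(\partial_\alpha)^{\perp} = \textnormal{Im}(\partial_\alpha)$, which is (i). I expect the only real obstacle to be this verification that dividing Fourier coefficients by $in$ returns an antiderivative lying inside the domain of the operator — a routine $\ell^{2}$ estimate, but it is exactly what upgrades denseness of the range to closedness.
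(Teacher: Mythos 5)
Your proof is correct, and it reaches the conclusion by a genuinely different route from the paper. The paper never invokes Fourier series: to prove (i) it takes $q\perp\textnormal{Ker}(\D/\D\alpha)$, observes that orthogonality to all $\alpha$-independent functions forces $\int_{0}^{2\pi}q(v_{\perp},\theta)\,d\theta=0$, and then exhibits the explicit antiderivative $K(v_{\perp},\alpha)=\int_{0}^{\alpha}q(v_{\perp},\theta)\,d\theta$, checking $2\pi$-periodicity from that zero-mean condition and membership in $L^{2}$ via Jensen's inequality; closedness of the range and (iii) are then read off from (i) and (ii). You instead diagonalize $\D/\D\alpha$ as the multiplier $in$ on vector-valued Fourier modes, which makes the kernel, the projection $P$, and the identification $\textnormal{Im}(\D/\D\alpha)=\ker P$ all visible simultaneously, with closedness coming for free as the kernel of a bounded operator and the $\ell^{2}$ bound $\|d_{n}/(in)\|\le\|d_{n}\|$ replacing Jensen. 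The two constructions produce the same antiderivative up to its zeroth mode, so the content is identical; your version has a cleaner logical flow and yields the sharper bound $\|f\|\le\|g\|$ for the inverse on the zero-mean subspace, while the paper's elementary integration argument avoids any appeal to vector-valued Fourier analysis and transfers verbatim to the $\D/\D\beta$ operator used in Appendix B. One small point worth making explicit in your write-up: since the ambient space consists of real-valued functions, either work with the real trigonometric system or note that $c_{-n}=\overline{c_{n}}$ so that your candidate preimage $\sum_{n\neq 0}d_{n}/(in)\,e^{in\alpha}$ is again real-valued; this is cosmetic and does not affect the argument.
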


\begin{proof}
$(i)$ Let $q\in\Big( \textnormal{Ker}\big(\cfrac{\D}{\D\alpha}\big) \Big)^{\perp}$.
Then, for any $g \in \textnormal{Ker}\Big(\cfrac{\D}{\D\alpha}\Big)$,
\textit{i.e.} $g = g(v_{\perp})$, we have
\begin{equation}\label{A1}
\int_{0}^{2\pi}\int_{0}^{+\infty} q(v_{\perp},\alpha)\,g(v_{\perp}) \, v_{\perp}\,dv_{\perp}\,
d\alpha = 0 \, .
\end{equation}
Remarking that $q$ can be written as
\begin{equation}
q(v_{\perp},\alpha) = \cfrac{\D}{\D\alpha} \Bigg( \int_{0}^{\alpha} q(v_{\perp},\theta)\,
d\theta\Bigg) \, ,
\end{equation}
we only have to prove that the function $K$ defined by
\begin{equation}
K(v_{\perp},\alpha) = \int_{0}^{\alpha} q(v_{\perp},\theta)\,d\theta \, ,
\end{equation}
is in $L_{\#_{2\pi}}^{2}(\R;L^{2}(\R^{+}; v_{\perp}\,dv_{\perp}))$, \textit{i.e.} $K$ is
$2\pi$-periodic in $\alpha$ and $\displaystyle \int_{0}^{2\pi}\int_{0}^{+\infty} \big|
K(v_{\perp},\alpha)\big|^{2} \,v_{\perp}\,dv_{\perp}\,d\alpha < +\infty$. \\
\indent First, we have
\begin{equation}
K(v_{\perp},\alpha+2\pi)-K(v_{\perp},\alpha)=\int_{0}^{2\pi}q(v_{\perp},\theta)\,d\theta=0\,,
\end{equation}
by \eqref{A1}. Second, using Jensen's inequality, we remark that
\begin{equation}
\big|K(v_{\perp},\alpha)\big|^{2} \leq |\alpha| \,\int_{0}^{\alpha} \big|q(v_{\perp},\theta)
\big|^{2} \,d\theta \leq 2\pi\,\int_{0}^{2\pi}\big|q(v_{\perp},\theta)\big|^{2}\,d\theta\,.
\end{equation}
Then
\begin{equation}
\int_{0}^{2\pi} \int_{0}^{+\infty} \big|K(v_{\perp},\alpha)\big|^{2}\,v_{\perp}\,dv_{\perp}\,
d\alpha \leq 4\pi^{2}\, \int_{0}^{2\pi}\int_{0}^{+\infty}\big|q(v_{\perp},\theta)\big|^{2}\,
v_{\perp}\,dv_{\perp}\,d\theta < +\infty \, ,
\end{equation}
since $q \in L_{\#_{2\pi}}^{2}(\R; L^{2}(\R^{+}; v_{\perp}\,dv_{\perp}))$. Thus $q \in
\textnormal{Im}\Big(\cfrac{\D}{\D\alpha}\Big)$ and so $\textnormal{Ker}\Big(
\cfrac{\D}{\D\alpha}\Big)^{\perp} \subset \textnormal{Im}\Big(\cfrac{\D}{\D\alpha}\Big)$.
Since the converse inclusion is obvious, $(i)$ is proved and the range of the operator
is indeed closed.

$(ii)$ It is clear that $f\in L_{\#_{2\pi}}^{2}(\R; L^{2}(\R^{+}; v_{\perp}\,dv_{\perp}))$
implies that $Pf\in L_{\#_{2\pi}}^{2}(\R; L^{2}(\R^{+}; v_{\perp}\,dv_{\perp}))$. Then, using
the projection characterization, we have for any function $\psi \in L_{\#_{2\pi}}^{2}(\R;
L^{2}(\R^{+}; v_{\perp}\,dv_{\perp}))$ such that $\cfrac{\D\psi}{\D\alpha} = 0$,
\begin{equation}
\begin{split}
\int_{0}^{2\pi} &\int_{0}^{+\infty} \Bigg( f(v_{\perp},\alpha)-\cfrac{1}{2\pi}\,
\int_{0}^{2\pi}f(v_{\perp},\theta)\,d\theta\Bigg)\,\psi(v_{\perp})\,v_{\perp}\,dv_{\perp}\,
d\alpha \\
& =\int_{0}^{+\infty}\Bigg(\int_{0}^{2\pi}f(v_{\perp},\alpha)\,d\alpha\Bigg)\,\psi(v_{\perp})
\,v_{\perp}\,dv_{\perp} -\Bigg( \cfrac{1}{2\pi}\int_{0}^{2\pi}\,d\alpha\Bigg) \,
\int_{0}^{+\infty}\Bigg(\int_{0}^{2\pi}f(v_{\perp},\theta)\,d\theta\Bigg)\,\psi(v_{\perp})
\,v_{\perp}\,dv_{\perp} \\
& =0 \, .\nonumber
\end{split}
\end{equation}
Hence $Pf$ defined in \eqref{def:proj} is the projection of $f$ on $\textnormal{Ker}
\Big(\cfrac{\D}{\D\alpha}\Big)$.

$(iii)$ Using item $(i)$ we have only to prove that any $f \in L_{\#_{2\pi}}^{2}(\R;
L^{2}(\R^{+}; v_{\perp}\,dv_{\perp}))$ writes as a sum $f=g+h$ with $g \in \textnormal{Ker}\Big(
\cfrac{\D}{\D\alpha}\Big)$ and $h\in\textnormal{Im}\Big(\cfrac{\D}{\D\alpha}\Big)$. Given
$f\in L_{\#_{2\pi}}^{2}(\R; L^{2}(\R^{+}; v_{\perp}\,dv_{\perp}))$, let $g$ be the projection
of $f$ on $\textnormal{Ker}\Big(\cfrac{\D}{\D\alpha}\Big)$ and let $h=f-g$. Then, by item
$(ii)$ we only need to show that $h \in \textnormal{Im}\Big(\cfrac{\D}{\D\alpha}\Big)$.
This can be done using a similar way as in the proof of item $(i)$. 
This concludes the proof of the Lemma.
\end{proof}

\section{The operator $\cfrac{\D}{\D\tau}+(\mathbf{v}\times\mathcal{M})
\cdot\Nabla_{\mathbf{v}}$}
\begin{lemma}\label{lemma:2}
Let the unbounded operator \par 
$\cfrac{\D}{\D\tau}+(\mathbf{v} \times \mathcal{M})
\cdot \Nabla_{\mathbf{v}} \,:\,L^{\infty}\big(0,T;L_{\#_{1}}^{\infty}(\R^{+};L^{2}(\R^{6}))\big)
\,\to\, L^{\infty}\big(0,T;L_{\#_{1}}^{\infty}(\R^{+}; L^{2}(\R^{6}))\big)$. Then we have
\begin{equation} \label{deco_ortho}
L^{\infty}\big(
0,T; L_{\#_{1}}^{\infty}(\R^{+}; L^{2}(\R^{6}))\big) = \textnormal{Ker}\left(\cfrac{\D}{\D\tau}+
(\mathbf{v} \times \mathcal{M})\cdot \Nabla_{\mathbf{v}} \right) \oplus \textnormal{Im}
\Bigg(\cfrac{\D}{\D\tau}+(\mathbf{v} \times \mathcal{M}) \cdot \Nabla_{\mathbf{v}} \Bigg).
\end{equation}
%\end{itemize}
\end{lemma}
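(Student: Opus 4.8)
The plan is to reduce the statement to the one-dimensional periodic situation already handled in Lemma~\ref{lemma:1}, by conjugating the operator $A := \cfrac{\D}{\D\tau} + (\mathbf{v}\times\mathcal{M})\cdot\Nabla_{\mathbf{v}}$ to the plain derivative $\cfrac{\D}{\D\tau}$ through the same $\tau$-dependent change of velocity variables $\mathbf{v}\mapsto\mathbf{u}=\ucar(\tau,\mathbf{v})$ that was used throughout Sections~\ref{SecTMaE} and \ref{SecTMiE}. For $F=F(t,\tau,\mathbf{x},\mathbf{v})$ put $\widetilde F := F\circ\ucar^{-1}$, that is $\widetilde F(t,\tau,\mathbf{x},\mathbf{u}) = F\big(t,\tau,\mathbf{x},\ucar(-\tau,\mathbf{u})\big)$. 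Since $\tau\mapsto\ucar(-\tau,\mathbf{u})=r(-\tau)\mathbf{u}$ is the characteristic curve of $A$, i.e.\ it solves $d\mathbf{v}/d\tau=\mathbf{v}\times\mathcal{M}$, the very computation that in Section~\ref{SecTMaE} gives $\cfrac{\D (l\circ\ucar^{-1})}{\D\tau} = \big(\cfrac{\D l}{\D\tau}+(\mathbf{v}\times\mathcal{M})\cdot\Nabla_{\mathbf{v}}l\big)\circ\ucar^{-1}$ yields, for a general $F$ in the domain of $A$,
\begin{equation*}
\cfrac{\D \widetilde F}{\D\tau} \;=\; \widetilde{AF}\,.
\end{equation*}
Moreover, for each fixed $\tau$ the map $\mathbf{v}\mapsto\ucar(\tau,\mathbf{v})$ is a rotation, hence measure preserving on $\R^{6}$, and $r$ is $1$-periodic, so $\tau\mapsto F$ is $1$-periodic if and only if $\tau\mapsto\widetilde F$ is. Therefore $F\mapsto\widetilde F$ is an isometric isomorphism of $L^{\infty}\big(0,T;L_{\#_{1}}^{\infty}(\R^{+};L^{2}(\R^{6}))\big)$ onto itself which intertwines $A$ with $\cfrac{\D}{\D\tau}$; in particular it carries $\textnormal{Ker}(A)$ onto $\textnormal{Ker}\big(\cfrac{\D}{\D\tau}\big)$ and $\textnormal{Im}(A)$ onto $\textnormal{Im}\big(\cfrac{\D}{\D\tau}\big)$. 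Hence it suffices to establish \eqref{deco_ortho} for $\cfrac{\D}{\D\tau}$ alone, the variables $(t,\mathbf{x},\mathbf{u})$ playing the role of inert parameters.

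For $\cfrac{\D}{\D\tau}$ one repeats almost verbatim the argument of Lemma~\ref{lemma:1}. Its kernel is the set of functions independent of $\tau$, and the candidate projector onto it is the $\tau$-average $P$, $(Pg)(t,\tau,\mathbf{x},\mathbf{u})=\int_{0}^{1}g(t,\sigma,\mathbf{x},\mathbf{u})\,d\sigma$. Given $g$, set $h:=g-Pg$ and
\begin{equation*}
K(t,\tau,\mathbf{x},\mathbf{u}) \;=\; \int_{0}^{\tau}h(t,\sigma,\mathbf{x},\mathbf{u})\,d\sigma\,,
\end{equation*}
so that $\cfrac{\D K}{\D\tau}=h$. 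One then checks that $K$ is $1$-periodic in $\tau$ because $\int_{0}^{1}h\,d\sigma=0$, and that $K\in L^{\infty}\big(0,T;L_{\#_{1}}^{\infty}(\R^{+};L^{2}(\R^{6}))\big)$ by the same Jensen (or Cauchy--Schwarz) estimate as in the proof of Lemma~\ref{lemma:1}$(i)$; this gives $h\in\textnormal{Im}\big(\cfrac{\D}{\D\tau}\big)$ and thus the splitting $g=Pg+h$. Uniqueness follows from $\textnormal{Ker}\big(\cfrac{\D}{\D\tau}\big)\cap\textnormal{Im}\big(\cfrac{\D}{\D\tau}\big)=\{0\}$: if $\phi=\cfrac{\D\psi}{\D\tau}$ with $\psi$ $1$-periodic in $\tau$ and $\phi$ independent of $\tau$, then $\psi(t,\tau,\mathbf{x},\mathbf{u})=\psi(t,0,\mathbf{x},\mathbf{u})+\tau\,\phi(t,\mathbf{x},\mathbf{u})$, and $1$-periodicity forces $\phi=0$. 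Transporting this decomposition back through $F\mapsto\widetilde F$ gives \eqref{deco_ortho}, and since $A$ is antisymmetric the same decomposition also shows $\textnormal{Ker}(A)^{\perp}=\textnormal{Im}(A)$, i.e.\ the range of $A$ is closed, as claimed in \eqref{decompo_orthogonal}.

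The two last paragraphs are essentially routine once the conjugation is set up. I expect the only delicate point to be that first step: making it precise that the $\tau$-dependent change of variables $\mathbf{v}\mapsto\ucar(\tau,\mathbf{v})$ is a genuine isometry of $L^{\infty}\big(0,T;L_{\#_{1}}^{\infty}(\R^{+};L^{2}(\R^{6}))\big)$ and that it intertwines the \emph{unbounded} operators $A$ and $\cfrac{\D}{\D\tau}$ — in particular that $F$ belongs to the domain of $A$ exactly when $\widetilde F$ belongs to the domain of $\cfrac{\D}{\D\tau}$, and that $\cfrac{\D\widetilde F}{\D\tau}=\widetilde{AF}$ holds in the appropriate weak sense — so that kernels, ranges and the direct sum are transported faithfully. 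Everything else is the parameter-dependent version of Lemma~\ref{lemma:1}.
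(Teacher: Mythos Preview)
Your argument is correct, and the reduction to Lemma~\ref{lemma:1} is the right idea, but the route you take is not the one in the paper. You conjugate $A=\cfrac{\D}{\D\tau}+(\mathbf{v}\times\mathcal{M})\cdot\Nabla_{\mathbf{v}}$ directly to the plain $\cfrac{\D}{\D\tau}$ via the characteristic change $\mathbf{v}\mapsto\mathbf{u}=\ucar(\tau,\mathbf{v})$, and then run the one-variable periodic argument in $\tau$. The paper instead performs two successive changes of variables: first cylindrical coordinates $(v_{||},v_{\perp},\alpha)$ in velocity, which turns $A$ into $\cfrac{\D}{\D\tau}-2\pi\cfrac{\D}{\D\alpha}$, and then a further linear change $(\tau,\alpha)\mapsto(\sigma,\beta)=(\tau+\alpha/2\pi,\,\tau-\alpha/2\pi)$, which reduces the operator to $2\cfrac{\D}{\D\beta}$; Lemma~\ref{lemma:1} is then invoked for $\cfrac{\D}{\D\beta}$. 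Your conjugation is more economical and reuses exactly the map $\ucar$ that drives the rest of the paper, so it is arguably the cleaner choice; the paper's detour through cylindrical coordinates has the advantage of making the link with the operator $\cfrac{\D}{\D\alpha}$ of Appendix~A explicit, which is what is exploited in Lemma~\ref{lemma:3} and in Section~\ref{SecTSMMvsMM}. Your caveat about carefully checking that the conjugation transports domains and ranges is well placed, and applies equally to the paper's two-step version.
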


\begin{proof}
Let $f \in L^{\infty}\big(0,T; L_{\#_{1}}^{\infty}(\R^{+}; L^{2}(\R^{6}))\big)$.
Through the change of variables
\begin{equation}
\left\{
\begin{array}{l}
\textnormal{$v_{||}\in\R$ such that $v_{||} = \mathbf{v} \cdot \mathbf{e}_{1} \,$,} \\
\textnormal{$v_{\perp}\in\R^+$ such that $v_{\perp} = \sqrt{v_{2}^{2}+v_{3}^{2}} \, $,} \\
\textnormal{$\alpha\in [0,2\pi]$ such that $v_{2} = v_{\perp}\,\cos\alpha$ and
$v_{3} = v_{\perp}\,\sin\alpha$,}
\end{array}
\right.
\end{equation}
$\widetilde f$ \Big(where $\widetilde f(t,\tau,\mathbf{x},v_{||},v_{\perp},\alpha)\,=\,
f(t,\tau,\mathbf{x},v_{||}\,\mathbf{e}_{1}+v_{\perp}(\cos\alpha\,\mathbf{e}_{2}+\sin\alpha
\,\mathbf{e}_{3}))$\Big) is in $L^{\infty}\big(0,T; L_{\#_{1}}^{\infty}(\R^{+}; L_{\#_{2\pi}}^{2}(
\R;$ $L^{2}(\R^{3}\times\R\times \R^{+}; v_{\perp}\,d\mathbf{x}\,dv_{||}\,dv_{\perp})))
\big)$ while the operator $\cfrac{\D}{\D\tau}+(\mathbf{v} \times \mathcal{M})\cdot
\Nabla_{\mathbf{v}}$ writes $\cfrac{\D}{\D\tau}-2\pi\cfrac{\D}{\D\alpha}\,$. Next,
performing the second change of variables
\begin{equation}
\left\{
\begin{array}{l}
\sigma = \tau + \cfrac{1}{2\pi}\:\alpha \, ,\\
\beta = \tau - \cfrac{1}{2\pi}\:\alpha \, ,
\end{array}
\right.
\end{equation}
$\Check f$ \Big(where $\Check f(t,\sigma,\mathbf{x},v_{||},v_{\perp},\beta)\,=\,
\widetilde f(t,(\sigma+\beta)/2,\mathbf{x},v_{||},v_{\perp},\pi(\sigma-\beta))$\Big)
\, stays in \, $L^{\infty}\big(0,T; L_{\#_{1}}^{\infty}(\R^{+}; L_{\#_{2\pi}}^{2}(\R;$ 
$L^{2}(\R^{3}\times\R\times \R^{+}; v_{\perp}\,d\mathbf{x}\,dv_{||}\,dv_{\perp})))
\big)$ and the operator
$\cfrac{\D}{\D\tau}-2\pi\cfrac{\D}{\D\alpha}$ \big(and consequently $\cfrac{\D}{\D\tau}+
(\mathbf{v} \times \mathcal{M}) \cdot \Nabla_{\mathbf{v}}$\big) becomes $2\cfrac{\D}{\D
\beta}\,$. From now on, let us fix $(t,\mathbf{x})\in[0,T]\times\R^{3}$. Considering 
\begin{equation}
\begin{split}
\frac{\D}{\D\beta}: L_{\#_{1}}^{\infty}(\R^{+}; L_{\#_{2\pi}}^{2}(\R;\, 
& L^{2}(\R\times \R^{+}; v_{\perp}\,dv_{||}\,dv_{\perp}))) \\
&\to L_{\#_{1}}^{\infty}(\R^{+}; L_{\#_{2\pi}}^{2}(
\R; L^{2}(\R\times \R^{+}; v_{\perp}\,dv_{||}\,dv_{\perp}))),
\end{split}
\end{equation}
we know, by Lemma~\ref{lemma:1}, that $L_{\#_{2\pi}}^{2}(\R;L^{2}(\R^{+}; v_{\perp}\,
dv_{\perp})) = \textnormal{Ker}\Big(\cfrac{\D}{\D\beta}\Big) \oplus \textnormal{Im}
\Big(\cfrac{\D}{\D\beta}\Big)$. Hence, because $L_{\#_{1}}^{\infty}(\R^{+};$ 
$L_{\#_{2\pi}}^{2}(\R;L^{2}(\R\times\R^{+}; v_{\perp}\,dv_{||}\,dv_{\perp}))) \subset
L_{\#_{1}}^{2}(\R^{+}; L_{\#_{2\pi}}^{2}(\R;L^{2}(\R\times\R^{+}; v_{\perp}\,dv_{||}\,
dv_{\perp})))$, we can write $\check f=P\check f+(\check f-P\check f)$,
where $P\check f$ is defined as in \eqref{def:proj}. Since $\check f$ was chosen in
$L_{\#_{1}}^{\infty}(\R^{+}; L_{\#_{2\pi}}^{2}(\R;L^{2}(\R\times\R^{+}; v_{\perp}\,dv_{||}\,
dv_{\perp})))$, it is easy to see that $P\check f$ belongs to this same space. Obviously
$\check f-P\check f\in L_{\#_{1}}^{\infty}(\R^{+};L_{\#_{2\pi}}^{2}(\R;L^{2}(\R \times\R^{+};
v_{\perp}\,dv_{||}\,dv_{\perp})))$ and thus \eqref{deco_ortho} is proved.
\end{proof}

\begin{lemma}\label{lemma:3}
Let $f \in L^{\infty}\big(0,T; L_{\#_{1}}^{\infty}(\R^{+}; L^{2}(\R^{6}))\big)$ and let
$\widetilde f$ be defined as in the proof of Lemma~\ref{lemma:2}. Then
\begin{itemize}
\item[(i)] if $f\in\textnormal{Ker}\Big(\cfrac{\D}{\D\tau} + (\mathbf{v}\times\mathcal{M})
\cdot \Nabla_{\mathbf{v}} \Big)$ then for any fixed $(t,\mathbf{x},v_{||})\in [0,T]\times
\R^3\times\R$ we have\vspace*{-3mm}
$$\mathcal{F}\,:\,(v_{\perp},\alpha) \mapsto \displaystyle{\int_0^{1}\widetilde f(t,\tau,
\mathbf{x},v_{||},v_{\perp},\alpha)\,d\tau}\in\textnormal{Ker}\Big(\cfrac{\D}{\D\alpha}\Big)\, ,$$
\vspace*{-9mm}
\\
\item[(ii)] if $f\in\textnormal{Im}\Big(\cfrac{\D}{\D\tau} + (\mathbf{v}\times\mathcal{M})
\cdot \Nabla_{\mathbf{v}} \Big)$ then for any fixed $(t,\mathbf{x},v_{||})\in [0,T]\times
\R^3\times\R$ we have\vspace*{-3mm}
$$\mathcal{F}\,:\,(v_{\perp},\alpha) \mapsto \displaystyle{\int_0^{1}\widetilde f(t,\tau,
\mathbf{x},v_{||},v_{\perp},\alpha)\,d\tau}\in\textnormal{Im}\Big(\cfrac{\D}{\D\alpha}\Big).$$
\end{itemize}
\end{lemma}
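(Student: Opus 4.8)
The plan is to carry both statements over to the cylindrical velocity coordinates $(v_{||},v_\perp,\alpha)$ introduced in the proof of Lemma~\ref{lemma:2}, in which the operator $\frac{\D}{\D\tau}+(\mathbf{v}\times\mathcal{M})\cdot\Nabla_{\mathbf{v}}$ takes the simple form $\frac{\D}{\D\tau}-2\pi\,\frac{\D}{\D\alpha}$. Writing, as in that proof, $\widetilde f$ for the expression of $f$ in these variables, we have $\mathcal{F}(v_\perp,\alpha)=\int_0^1\widetilde f(t,\tau,\mathbf{x},v_{||},v_\perp,\alpha)\,d\tau$ for the fixed $(t,\mathbf{x},v_{||})$. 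Since $\widetilde f$ inherits the regularity of $f$, namely $\widetilde f\in L^\infty\big(0,T;L_{\#_1}^\infty(\R^+;L_{\#_{2\pi}}^2(\R;L^2(\R^3\times\R\times\R^+;v_\perp\,d\mathbf{x}\,dv_{||}\,dv_\perp)))\big)$, the $\tau$-average $\mathcal{F}$ is well defined and, for (a.e.) fixed $(t,\mathbf{x},v_{||})$, belongs to $L_{\#_{2\pi}}^2(\R;L^2(\R^+;v_\perp\,dv_\perp))$; this is the setting in which Lemma~\ref{lemma:1} applies. All the $\tau$-integrations and the exchange of $\frac{\D}{\D\alpha}$ with $\int_0^1 d\tau$ used below are justified in the distributional sense against test functions of $\alpha$.

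For item $(i)$: if $f\in\textnormal{Ker}\big(\frac{\D}{\D\tau}+(\mathbf{v}\times\mathcal{M})\cdot\Nabla_{\mathbf{v}}\big)$, then $\frac{\D\widetilde f}{\D\tau}=2\pi\,\frac{\D\widetilde f}{\D\alpha}$, hence
\[
\frac{\D\mathcal{F}}{\D\alpha}=\int_0^1\frac{\D\widetilde f}{\D\alpha}\,d\tau=\frac{1}{2\pi}\int_0^1\frac{\D\widetilde f}{\D\tau}\,d\tau=\frac{1}{2\pi}\big(\widetilde f(t,1,\mathbf{x},v_{||},v_\perp,\alpha)-\widetilde f(t,0,\mathbf{x},v_{||},v_\perp,\alpha)\big)=0,
\]
because $\tau\mapsto\widetilde f$ is $1$-periodic. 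Thus $\mathcal{F}\in\textnormal{Ker}\big(\frac{\D}{\D\alpha}\big)$. (Alternatively, any regular kernel element reads $\gamma(t,\mathbf{x},\ucar(\tau,\mathbf{v}))$, so that $\widetilde f$ depends on $(\tau,\alpha)$ only through $\alpha+2\pi\tau$ and its $\tau$-average over a period is manifestly $\alpha$-independent.)

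For item $(ii)$: if $f\in\textnormal{Im}\big(\frac{\D}{\D\tau}+(\mathbf{v}\times\mathcal{M})\cdot\Nabla_{\mathbf{v}}\big)$, choose $g$ in the domain with $f=\frac{\D g}{\D\tau}+(\mathbf{v}\times\mathcal{M})\cdot\Nabla_{\mathbf{v}}g$; in cylindrical coordinates this reads $\widetilde f=\frac{\D\widetilde g}{\D\tau}-2\pi\,\frac{\D\widetilde g}{\D\alpha}$. Integrating over $\tau\in[0,1]$ and using once more the $1$-periodicity of $\tau\mapsto\widetilde g$ to cancel the boundary term,
\[
\mathcal{F}=\int_0^1\widetilde f\,d\tau=-2\pi\,\frac{\D}{\D\alpha}\Big(\int_0^1\widetilde g(t,\tau,\mathbf{x},v_{||},v_\perp,\alpha)\,d\tau\Big).
\]
Since $\widetilde g$ has the same functional regularity as $\widetilde f$, the function $\int_0^1\widetilde g\,d\tau$ is $2\pi$-periodic in $\alpha$, lies in $L_{\#_{2\pi}}^2(\R;L^2(\R^+;v_\perp\,dv_\perp))$, and has $\alpha$-derivative equal to $-\mathcal{F}/2\pi\in L^2$; hence it belongs to the domain of $\frac{\D}{\D\alpha}$, and the displayed identity exhibits $\mathcal{F}\in\textnormal{Im}\big(\frac{\D}{\D\alpha}\big)$. (One may also conclude by averaging the identity in $\alpha$: the right-hand side integrates to zero over a $2\pi$-period, so $\mathcal{F}$ has vanishing $\alpha$-mean, which by Lemma~\ref{lemma:1}$(i)$--$(ii)$ places it in $\textnormal{Im}\big(\frac{\D}{\D\alpha}\big)$.)

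The only genuinely delicate point is the functional-space bookkeeping: checking that the change of variables indeed carries $L^\infty\big(0,T;L_{\#_1}^\infty(\R^+;L^2(\R^6))\big)$ into the space written above, and that the $\tau$-average of such a function falls, for a.e. $(t,\mathbf{x},v_{||})$, into the domain of the one-dimensional operator $\frac{\D}{\D\alpha}$ treated in Lemma~\ref{lemma:1}, so that the characterizations of that lemma (projection onto the kernel = $\alpha$-average, $\textnormal{Im}$ = mean-zero functions) may be invoked. Everything else reduces to the $1$-periodicity of $\tau\mapsto\widetilde f$ and $\tau\mapsto\widetilde g$, together with the explicit form $\frac{\D}{\D\tau}-2\pi\,\frac{\D}{\D\alpha}$ of the operator.
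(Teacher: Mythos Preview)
Your proof is correct and follows essentially the same approach as the paper: both parts pass to cylindrical coordinates where the operator becomes $\frac{\D}{\D\tau}-2\pi\,\frac{\D}{\D\alpha}$, then use the $1$-periodicity in $\tau$ to kill the $\tau$-derivative term after integration, yielding $\frac{\D\mathcal{F}}{\D\alpha}=0$ in part~(i) and $\mathcal{F}=-2\pi\,\frac{\D}{\D\alpha}\int_0^1\widetilde g\,d\tau$ in part~(ii). Your additional remarks on functional-space bookkeeping and the alternative mean-zero argument are not in the paper's proof but are harmless embellishments.
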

\begin{proof}
$(i)$ Let $f\in\textnormal{Ker}\Big(\cfrac{\D}{\D\tau}+(\mathbf{v}\times\mathcal{M})\cdot
\Nabla_{\mathbf{v}} \Big)$ and fix $(t,\mathbf{x},v_{||})\in [0,T]\times\R^3\times\R$.
Thus, by the proof of Lemma~\ref{lemma:2}, $\widetilde f\in\textnormal{Ker}\Big(
\cfrac{\D}{\D\tau}-2\pi\cfrac{\D}{\D\alpha}\Big)$. We then derive $\mathcal{F}$ :
\begin{eqnarray}
\cfrac{\D\mathcal{F}}{\D\alpha}\,(v_{\perp},\alpha) & = & \int_0^{1}\cfrac{\D\widetilde f}
{\D\alpha}(t,\tau,\mathbf{x},v_{||},v_{\perp},\alpha) \,d\tau \nonumber \\
& = & \cfrac{1}{2\pi}\int_0^{1}\cfrac{\D\widetilde f}{\D\tau}
(t,\tau,\mathbf{x},v_{||},v_{\perp},\alpha) \,d\tau \nonumber \\
& = & 0\,,
\end{eqnarray}
by the $1$-periodicity in $\tau$ of $\widetilde f$. Thus, $\mathcal{F}\in\textnormal{Ker}
\Big(\cfrac{\D}{\D\alpha}\Big)$. \\
$(ii)$ Now let $f\in\textnormal{Im}\Big(\cfrac{\D}{\D\tau} 
+(\mathbf{v}\times\mathcal{M})\cdot\Nabla_{\mathbf{v}} \Big)$. Then there exists a function
$h \in L^{\infty}\big(0,T; L_{\#_{1}}^{\infty}(\R^{+}; L^{2}(\R^{6}))\big)$ such that $\widetilde f
= \cfrac{\D \widetilde h}{\D\tau}-2\pi\cfrac{\D\widetilde h}{\D\alpha}$\,. Fixing $(t,\mathbf{x},
v_{||}) \in [0,T]\times\R^3\times\R$ and integrating in $\tau$ from $0$ to $1$,
we obtain, using that $h$ is $1$-periodic in $\tau$,
$$\mathcal{F}(v_{\perp},v_{||})= - 2\pi\cfrac{\D}{\D\alpha}\int_0^{1} \widetilde h(t,\tau,
\mathbf{x},v_{||},v_{\perp},\alpha) \,d\tau,$$ leading to $\mathcal{F}\in
\textnormal{Im}\Big(\cfrac{\D}{\D\alpha}\Big)$ and thus concluding the Lemma.
\end{proof}

\end{appendices}

\end{document}